\newcommand{\rvline}{\hspace*{-\arraycolsep}\vline\hspace*{-\arraycolsep}}
\newcommand{\C}{\mathbb{C}}
\newcommand{\kbalpha}{\overline{\mathrm{K}}_{\alpha}}
\newtheorem{theorem}{Theorem}[section]
\newtheorem{proposition}[theorem]{Proposition}
\newtheorem{lemma}[theorem]{Lemma}
\newtheorem{corollary}[theorem]{Corollary}
\newtheorem{definition}[theorem]{Definition}
\newtheorem{example}[theorem]{Example}
\newtheorem{remark}[theorem]{Remark}
\numberwithin{equation}{section}
\title{The moment polytope of the abelian polygon space}
\author[N\, Daundkar]{Navnath Daundkar}
\address{Chennai Mathematical Institute, India}
\email{navnath@cmi.ac.in}
\author[P\, Deshpande]{Priyavrat Deshpande}
\address{Chennai Mathematical Institute, India}
\email{pdeshpande@cmi.ac.in}
\begin{document}
\begin{abstract}
The moduli space of $n$ chains in the plane with generic side lengths that terminate on a fixed line is a smooth, closed manifold of dimension $n-1$.
This manifold is also equipped with a locally standard action of $\mathbb{Z}_2^{n-1}$. 
The orbit space of this action is a simple polytope called the moment polytope. 
Interestingly, this manifold is also the fixed point set of an involution on a toric manifold known as the abelian polygon space. 
In this article we show that the moment polytope of the moduli space of chains is completely characterized by the combinatorial data, called the \emph{short code} of the length vector. 
We also classify aspherical chain spaces using a result of Davis, Januszkiewicz and Scott.  
\end{abstract}
\thanks{Both the authors are partially supported by a grant from the Infosys foundation. The project is also supported by the second author's MATRICS grant funded by the Science and Engineering Board, Government of India.}
\keywords{abelian polygon space, real toric variety, moment polytope, aspherical manifolds}
\subjclass[2010]{52B05, 52C25,  55R80, 58D28}

\maketitle

\section{Introduction}\label{intro}
A \emph{length vector}  $\alpha :=(\alpha_{1},\alpha_{2},\dots,\alpha_{m})$ is an $m$-tuple of positive real numbers. 
The \emph{spatial polygon space} $\mathrm{N}_{\alpha}$ parameterizes all piecewise linear paths in $\mathbb{R}^{3}$, whose side-lengths are prescribed by $\alpha$, up to rigid motions.
Formally, 
\[ \mathrm{N}_{\alpha}=\{(v_{1},v_{2},\dots,v_{m})\in (S^{2})^{m} : \displaystyle\sum_{i=1}^{m}\alpha_{i}v_{i} = 0 \}/\mathrm{SO}_{3},\]
where $\mathrm{SO}_{3}$ acts diagonally. 
If we choose $\alpha$ such that $\sum_{i=1}^{m}\pm \alpha_{i} \neq 0$ then the corresponding $\mathrm{N}_{\alpha}$ is a complex manifold of dimension $2(m-3)$. 
Such length vectors are called \emph{generic}. 
In this article we consider only generic length vectors.
The map $(x,y,z)\mapsto (x,y,-z)$ induces an involution on $\mathrm{N}_{\alpha}$. 
Let us denote the fixed point set of this involution by $\overline{\mathrm{M}}_{\alpha}$. 
It can also be written as: 
$$\overline{\mathrm{M}}_{\alpha}= \{(v_{1},v_{2},\dots,v_{m})\in (S^{1})^{m} : \displaystyle\sum_{i=1}^{m}\alpha_{i}v_{i} = 0 \}/\mathrm{O}_{2}.$$

The space $\overline{\mathrm{M}}_{\alpha}$ is also called the moduli space of planar polygons (or \emph{planar polygon space}); it is a smooth, closed manifold of dimension $m-3$. 

Polygon spaces (both spatial and planar) have been studied extensively in past few decades. 
For example, the integer cohomology ring of $\mathrm{N}_{\alpha}$ and the mod-$2$ cohomology ring of $\overline{\mathrm{M}}_{\alpha}$ computed by Hausmann and Knutson in \cite{JA}.  
In \cite{hkgrasmann}, the same authors described $\mathrm{N}_{\alpha}$ as a symplectic quotient of Grasmmannian $G(2,\mathbb{C}^{m})$ of $2$-planes in $\mathbb{C}^{m}$. 

A $2n$-dimensional symplectic, smooth manifold is called a toric manifold if it admits a Hamiltonian action of an $n$-dimensional torus.  
Note that a toric manifold is equivariantly diffeomorphic to a non-singular toric variety (see \cite[Theorem 7.5]{MT})
It turns out that for some choices of $\alpha$ the manifold $\mathrm{N}_{\alpha}$ is a toric manifold.
The half-dimensional torus action is given by bending flows \cite{KM2}. 
Hausmann and Roudrigue \cite[Proposition 6.8]{JE} provided a (combinatorial) sufficient condition for $\mathrm{N}_{\alpha}$ to be a toric manifold. 
 
A smooth $2n$-dimensional (respectively $n$-dimensional) manifold $M$ is called a quasi-toric (respectively small cover) if it has a locally standard
action of $n$-dimensional torus (respectively $\mathbb{Z}_{2}^{n}$) such that the orbit space can be identiﬁed with a simple $n$-polytope. 
These manifolds were introduced by Davis and Januszkiewicz in \cite{MT}.  
They showed that many topological invariants of these spaces are encoded in the combinatorics of the associated quotient polytope.  
In the same paper they prove that the small cover can be realized as a fixed point set of an involution on a quasi-toric manifold. 
Note that in this article we will refer to the quotient polytope as the moment polytope even in the context of small covers. 

Since the Hamiltonion action of a torus is locally standard (see \cite[Section 7.3]{MT} for a proof) and the image of the moment map is a simple convex polytope, hence toric manifolds are quasi-toric. 
Moreover, the fixed point set of an anti-symplectic involution on a toric manifold is a small cover. 
This is true since the image under the restriction of the moment map to the fixed point set is again the same moment polytope. 
It can be seen that $\overline{\mathrm{M}}_{\alpha}$ is the fixed point set of an anti-symplectic involution on $\mathrm{N}_{\alpha}$.
Therefore it has the structure of a small cover whenever $\mathrm{N}_{\alpha}$ is a toric manifold.
We refer the interested reader to the paper of Hausmann and Knutson \cite{hkgrasmann} for the terminologies related to symplectic structure that are not defined here. 

Whether or not a small cover is aspherical is also determined by the moment polytope, as given by the following result of Davis, Januszkiewicz and Scott \cite[Theorem 2.2.5]{MTR}. 

\begin{theorem}\label{artv}
Let $M$ be a small cover and $P$ be the associated quotient polytope. 
Then the following statements are equivalent: 
\begin{enumerate}
\item $M$ is aspherical.
\item The boundary complex of $P$ is dual to a flag complex.
\item The dual cubical subdivision of $M$ is nonpositively curved.
\end{enumerate}
\end{theorem}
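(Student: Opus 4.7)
The plan is to prove the three-way equivalence by going around the cycle $(2) \Leftrightarrow (3) \Rightarrow (1) \Rightarrow (2)$. The two main tools will be Gromov's combinatorial criterion---a piecewise Euclidean cube complex is locally $\mathrm{CAT}(0)$ if and only if every vertex link is a flag simplicial complex---together with the Cartan--Hadamard theorem for $\mathrm{CAT}(0)$ spaces. The overall strategy is to reduce asphericity of a cube complex to a combinatorial flag condition on vertex links, and then to identify those links combinatorially with the dual of $\partial P$.

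First I would describe the dual cubical subdivision of $M$ in enough detail to extract its vertex links. The simple polytope $P$ has a canonical cubulation with one $n$-cube per vertex $v$ of $P$, spanned by the barycenters of the faces of $P$ through $v$; because the $\mathbb{Z}_2^n$-action on $M$ is locally standard, this cubulation lifts to a cube complex structure on $M$. Via a local model computation---using the standard reflection action on $\mathbb{R}^n$ with its canonical cubulation---I would check that the link of any vertex of this cube complex is naturally isomorphic to the simplicial complex dual to the boundary complex $\partial P$. Under this identification, condition (2) is precisely the assertion that each vertex link is flag, and Gromov's theorem then yields $(2) \Leftrightarrow (3)$.

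For $(3) \Rightarrow (1)$ I would invoke Cartan--Hadamard: the universal cover of a complete locally $\mathrm{CAT}(0)$ geodesic space is globally $\mathrm{CAT}(0)$, and every $\mathrm{CAT}(0)$ space is contractible. Hence $\widetilde{M}$ is contractible and $M$ is aspherical. The main obstacle is the reverse direction $(1) \Rightarrow (2)$. My approach is by contrapositive: given facets $F_1, \dots, F_k$ of $P$ that pairwise intersect but have empty common intersection---a combinatorial witness to the failure of flagness---I would use the characteristic function of the small cover to construct, inside $\widetilde{M}$, a subcomplex whose homotopy type detects a nontrivial higher homotopy group, contradicting asphericity. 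Following Davis, one can realise $\widetilde{M}$ inside the Davis complex of the right-angled Coxeter group generated by the facets of $P$, and in that picture a missing simplex is exactly what prevents a certain sphere from bounding. Packaging this into a short self-contained argument, without importing the full machinery of Coxeter complexes, is where I expect the genuine technical work to lie.
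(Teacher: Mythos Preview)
The paper does not prove this theorem at all: it is quoted verbatim as \cite[Theorem 2.2.5]{MTR} and used as a black box. There is therefore no in-paper argument to compare your proposal against.

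For what it is worth, your outline is the standard route and matches the strategy in \cite{MTR}: Gromov's link condition for cube complexes gives $(2)\Leftrightarrow(3)$, Cartan--Hadamard gives $(3)\Rightarrow(1)$, and the contrapositive of $(1)\Rightarrow(2)$ is established by showing that an empty simplex in the dual of $\partial P$ produces an essential sphere in the universal cover. Your identification of the vertex links of the dual cubical subdivision with the dual of $\partial P$ is correct, and you are right that the last implication is where the real content lies; the Davis--Januszkiewicz--Scott argument for it does indeed pass through the right-angled Coxeter machinery you allude to.
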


Our focus in this paper is to determine which planar polygonal spaces are aspherical; in order to do that we consider the polygonal spaces that are small covers.
In fact, we describe the face poset of the associated moment polytope in terms of the length vector information.

Some of the planar polygonal spaces that are small covers can also be described as moduli spaces of chains.

\begin{definition}\label{dcs}(Chain space)
Let $\alpha=(\alpha_{1},\alpha_{2},\dots,\alpha_{m})$ be a length vector. The chain space corresponding to $\alpha$ is defined as : 
\[
\mathrm{Ch}(\alpha)=\{(v_{1},v_{2},\dots,v_{m-1})\in (S^{1})^{m-1} : \displaystyle\sum_{i=1}^{m-1}\alpha_{i}x_{i} = \alpha_{m} \}/\mathbb{Z}_{2},
\]
where $v_{i}=(x_{i},y_{i})$ and the group $\mathbb{Z}_{2}$ acts diagonally.
\end{definition}

Note that $\mathrm{Ch}(\alpha)\neq \emptyset$ if $\alpha_{m}\leq \sum_{i=1}^{m-1}\alpha_{m-1}$. 
In fact, if $\alpha$ is generic then $\mathrm{Ch}(\alpha)$ is a smooth, closed manifold of dimension $m-2$.
The elements of a chain space can be thought of as a piece-wise linear path with side lengths $\alpha_{1},\alpha_{2},\dots,$ $\alpha_{m-1}$ terminating at the line $x=\alpha_m$, considered up to reflection across the $X$-axis.

The spatial version of the chain space for a generic length vector was introduced by Hausmann and Knutson; it was referred to as the \emph{abelian polygon space}. 
It is a \emph{toric manifold} (see \cite[Section 1]{JA} for a proof). 
An element of this spatial version can be viewed as a piece-wise linear path with side lengths $\alpha_{1},\alpha_{2},\dots,\alpha_{m-1}$ terminating on the plane $x=\alpha_{m}$, modulo the rotations about the $X$-axis. 
It is easy to see that $\mathrm{Ch}(\alpha)$ is a fixed point set of an anti-symplectic involution on its spatial version. 
Thus $\mathrm{Ch}(\alpha)$ is a small cover. 

In this article we characterize those $\alpha$ for which $\mathrm{Ch}(\alpha)$ is aspherical. 
In order to do that we introduce a combinatorial object called the \emph{short code} of a length vector (see, \Cref{tscmglv}). 
We first show that the short code of a (generic) length vector completely determines the moment polytope of the associated chain space.  
Then we establish a combinatorial condition to determine whether or not the chain space is aspherical. 
In particular, we prove the following : 

\begin{theorem}\label{main}
Let $\alpha$ be a generic length vector. Then the corresponding chain space is aspherical  if and only if the short code of $\alpha$ is one of the following :
\begin{enumerate}
\item $<\{1,2,\dots,m-3,m\}>$,
\item $<\{1,2,\dots,m-2,m\}>$,
\item $<\{1,2,\dots,m-4,m-2,m\}>$,
\item $<\{1,2,\dots,m-4,m-1,m\}>$.
\end{enumerate}
\end{theorem}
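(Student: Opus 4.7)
The plan is to combine two ingredients: the structural result (proved earlier in the paper) that the short code of a generic length vector $\alpha$ completely determines the moment polytope $P_{\alpha}$ of the chain space $\mathrm{Ch}(\alpha)$, together with the Davis--Januszkiewicz--Scott criterion (Theorem~\ref{artv}). Since $\mathrm{Ch}(\alpha)$ is a small cover, asphericity is equivalent to flagness of the boundary complex dual to $P_{\alpha}$; concretely, every family of facets of $P_{\alpha}$ that intersect pairwise must have a common point.

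The first step is to translate pairwise and joint intersection of facets purely into combinatorics of the short code. Using the explicit description of $P_{\alpha}$ read off from the short code, each facet of $P_{\alpha}$ corresponds to a specific combinatorial datum (a short subset, or a singleton index), and one can write down a clean criterion for when a collection of such facets meets in a face of the expected codimension. This yields a combinatorial flag condition on the short code that is equivalent to asphericity.

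For the sufficiency direction, I would verify this condition case by case for each of the four short codes listed in the theorem. Each is very rigid: cases (1) and (2) have a single maximal short subset (of size $m-3$ together with $m$, respectively $m-2$ together with $m$), while cases (3) and (4) have exactly two maximal short subsets with a controlled overlap. In all four cases the polytope $P_{\alpha}$ turns out to be a simplex, a product of two simplices, or a closely related simple polytope, and flagness of the dual boundary complex can be read off directly from its facet incidences.

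The main obstacle is the necessity direction, which I would prove by contrapositive: given any generic short code not appearing in the list, produce a set of three (or more) pairwise intersecting facets of $P_{\alpha}$ whose common intersection is empty, thereby exhibiting an empty simplex in the dual boundary and violating flagness. The cases should be organized by the sizes and mutual overlap of the maximal short subsets of $\alpha$; in each remaining case I expect a uniform choice of witnessing facets, built from small short subsets or suitably chosen index pairs appearing in the code, to work. Keeping this case analysis exhaustive without missing boundary subcases is the delicate part; once it is complete, the two directions combine with the short-code description of $P_{\alpha}$ to yield Theorem~\ref{main}.
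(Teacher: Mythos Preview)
Your framework---reduce via Theorem~\ref{artv} to flagness of the moment polytope $\mathrm{P}(\alpha)$ and analyze this through the short code---is correct and is exactly what the paper does. But your sufficiency sketch contains a concrete error: you claim that in the four listed cases $\mathrm{P}(\alpha)$ is ``a simplex, a product of two simplices, or a closely related simple polytope''. A simplex of dimension $\geq 2$ is \emph{never} a flagtope (all of its facets meet pairwise yet have empty common intersection), so this cannot be right. In fact $\mathrm{P}(\alpha)$ is the $(m-2)$-cube in cases (1) and (2) (case (2) is handled separately in the paper, where $\mathrm{Ch}(\alpha)\cong T^{m-2}$ directly), and $P_5\times I^{m-4}$, respectively $P_6\times I^{m-4}$, in cases (3) and (4).

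More importantly, you are missing the key external input that drives the paper's argument: the Blind--Blind theorem (\Cref{BLIND}), which classifies triangle-free simple $n$-polytopes with at most $2n+2$ facets. For sufficiency the paper checks only the weaker condition that $\mathrm{P}(\alpha)$ is \emph{triangle-free} (every admissible subset of size $m-4$ has at least four admissible supersets of size $m-3$); then the facet count from \Cref{FACET} together with Blind--Blind identifies $\mathrm{P}(\alpha)$ as one of the products above, which are visibly flagtopes. For necessity, a flagtope of dimension $m-2$ must have at least $2(m-2)$ facets, while \Cref{FACET} gives at most $2(m-1)=2(m-2)+2$; this pins the facet count to three values, and Blind--Blind plus the short-code description of the face poset finishes. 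Your contrapositive plan---exhibit an empty triangle for every short code off the list---is not wrong in principle, but you offer no way to organize the infinitely many remaining short codes; the facet-count bound and Blind--Blind are precisely what collapse this to a finite check.
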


The paper is organized as follows : In \Cref{pre}, 
we first state the motivation for the results in this paper. 
We also explain why it is difficult to investigate whether or not of general planar polygon space is aspherical using techniques of Davis-Januszkiewicz-Scott.
In \Cref{tsc}, we introduce the notion of \emph{short codes}. 
We then show that a chain space is diffeomorphic to some planar polygon space. 
In \Cref{fpmp}, we describe the moment polytope of a chain space and explicitly describe it in some special cases. 
Next we determine its number of facets in terms of the short subset information. 
We then introduce the poset of \emph{admissible subsets} corresponding to a generic length vector and  prove that it is isomorphic to the face poset of the corresponding moment polytope. 
In \Cref{pmt}, we prove our main result \Cref{main}.

\section{Motivation}\label{pre}
The results in our paper are motivated by the techniques developed by Davis, Januszkiewicz and Scott  to conclude that the (real part of) moduli space of certain point configurations is aspherical. 
In particular, the authors consider the following situation: $M_{\C}$ is an $n$-dimensional complex manifold and $D_{\C}$ is a smooth divisor. 
Assume that there is a smooth involution defined on $M_{\C}$ which is locally isomorphic to complex conjugation on $\C^n$.
This situation provides a `real version' of the pair $(M_{\C}, D_{\C})$, which we denote by $(M, D)$, where $M$ is the fixed point set of the involution and $D = D_{\C}\cap M$.
In such a case $D$ is a union of codimension-one smooth submanifolds which is locally isomorphic to an arrangement of hyperplanes. 
Consequently, the complement of $D$ is a disjoint union of cells, called \emph{chambers}, which are combinatorially equivalent to simple convex polytopes. 
The cell structure induced by $D$ has a cubical subdivision (i.e., one can subdivide the polytopal cells to obtain a tiling by cubes).
For (smooth) manifolds equipped such a cell structure the authors established a combinatorial condition to check whether or not the manifold is aspherical. 
In the remaining of this section we formally state their results that are useful to us and also show that the planar polygonal spaces posses similar structure. 

\begin{definition}\label{flagtope}
An $n$-dimensional convex polytope is called \emph{simple} if each vertex is an intersection of $n$ codimension-$1$ faces (also called  \emph{facets}).  
A simple polytope is called a flagtope 
if every collection of its pairwise intersecting facets has a nonempty intersection.   
\end{definition}

Flagtopes have many interesting combinatorial properties. 
For example, an $n$-dimensional flagtope has at least $2n$ facets. 
In fact, an $n$-cube is the only flagtope with the minimum possible facets. 

\begin{definition}\label{simplesc}
If all the cells of a regular cell complex are combinatorially equivalent to a simple polytope then it is called a simple cell complex.
\end{definition}

An interesting property of simple cell complexes is that it is possible to subdivide each cell into cubes, thus turning it into a cubical complex. 
Recall that a zonotope is a polytope all of whose faces are centrally symmetric (see \cite[Chapter 7]{G} for more details). 

\begin{definition}\label{zonotopal}
A regular cell complex is called zonotopal if every cell is combinatorially isomorphic to a zonotope. 
\end{definition}

We can now state an important consequence of Gromov's lemma that provides a combinatorial condition to check whether the given cubical complex is non-positively curved or not. 
The details of the following result can be found in \cite[Section 1.6]{MTR}

\begin{theorem}\label{Dav}
Suppose that $\mathrm{K}$ is a simple cell complex structure on an $n$-dimensional smooth manifold such that its dual cell complex is zonotopal.
If each $n$-cell $P$ of $K$ is combinatorially isomorphic to a flagtope then $K$ is aspherical. 
\end{theorem}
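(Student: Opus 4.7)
The plan is to obtain a canonical cubical subdivision $K'$ of $K$, show that $K'$ is locally CAT(0) by invoking Gromov's link condition, and then conclude asphericity via the Cartan--Hadamard theorem.

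First I would construct the cubical subdivision. Since each maximal cell $P$ of $K$ is a simple $n$-polytope, barycentric-type subdivision using the barycenters of the faces of $P$ yields a decomposition of $P$ into $n$-cubes, one cube for each vertex of $P$ (the cube associated to vertex $v$ has $v$ and the barycenter of $P$ as antipodal vertices, with the remaining vertices being barycenters of faces containing $v$). These cubes fit together across lower-dimensional cells of $K$ precisely because the dual cell complex is zonotopal: the star around any face of $K$ looks combinatorially like a zonotope, and zonotopes admit a canonical cubical dissection compatible with their face structure. This gives a well-defined cubical complex $K'$ whose underlying space is $K$.

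Next I would verify Gromov's link condition for $K'$: it suffices to show that for every vertex $w$ of $K'$, the simplicial link $\mathrm{lk}(w, K')$ is a flag complex, since then each component of the universal cover of $K'$ is CAT(0). The vertices of $K'$ fall into two types, those that are original vertices $v$ of $K$ and those that are barycenters of positive-dimensional faces. At an original vertex $v$, the link of $v$ in $K'$ is isomorphic to the dual simplicial complex of the vertex figure of $v$ inside the unique maximal cell $P$ containing $v$; this dual complex has a simplex for every collection of facets of $P$ meeting at $v$ that have a common intersection, and the flagtope hypothesis is exactly the statement that this simplicial complex is flag. At a barycenter of a lower-dimensional face, the link decomposes as a join of links of the form above together with links arising from the zonotopal dual structure, and a join of flag complexes is flag, so the condition again reduces to the flagtope hypothesis on the ambient maximal cells.

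With Gromov's condition verified, $K'$ is a locally CAT(0) cube complex. Because $K$ is a smooth manifold without boundary, its universal cover $\widetilde{K}$ is a complete, simply connected, locally CAT(0) geodesic space, hence globally CAT(0) by the Cartan--Hadamard theorem for metric spaces. Any CAT(0) space is contractible, so $\widetilde{K}$ is contractible and therefore $K$ is aspherical. The main obstacle I expect is the bookkeeping in the second step: correctly identifying the link of an internal vertex of $K'$ as a join built from flag complexes associated to the ambient simple polytope cells, which is where the zonotopal hypothesis on the dual cell complex is essential. Once that join decomposition is nailed down, the rest of the argument is a formal application of Gromov's lemma and Cartan--Hadamard.
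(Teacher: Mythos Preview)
The paper does not supply its own proof of this theorem; it simply refers the reader to Davis--Januszkiewicz--Scott for the details. Your overall strategy---cubical subdivision, Gromov's link condition, Cartan--Hadamard---is exactly the argument found there, so at the level of outline you are on track.

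However, your link analysis reverses the roles of the two hypotheses, and this is a genuine gap. At an original vertex $v$ of $K$ there is no ``unique maximal cell containing $v$'': in a closed manifold $v$ lies in many top cells. The link of $v$ in $K'$ has one vertex for each edge of $K$ incident to $v$, and a collection of such edges spans a simplex precisely when they all lie in a common cell of $K$; this is the nerve of the facets of the \emph{dual} cell at $v$. Its flagness is therefore controlled by the \emph{zonotopal} hypothesis on the dual (simple zonotopes are flagtopes), not by the flagtope hypothesis on the cells of $K$. Inside a single cube $[v,P]$ the link of $v$ is a full simplex regardless of whether $P$ is a flagtope, so the argument you give at this vertex is vacuous.

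The flagtope hypothesis enters instead at the barycenter of a top cell $P$: there the link in $K'$ is the nerve of the facets of $P$, which is flag exactly when $P$ is a flagtope. At the barycenter of an intermediate face $F$ the link is the join of a ``downward'' piece (the nerve of the facets of $F$, flag because faces of flagtopes are flagtopes) and an ``upward'' piece (governed by the zonotopal dual cell at $F$). Once you swap the roles of the two hypotheses in this way, the Gromov and Cartan--Hadamard steps go through as you describe.
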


We now show that the planar polygon spaces have a natural simple cell structure (such that the dual is zonotopal). 
However, it will turn out that in most cases the top-cells are not flagtopes. 

\begin{definition} \label{opps}
Let $\alpha=(\alpha_{1},\alpha_{2},\dots,\alpha_{m})$ be a generic length vector. The moduli space $\mathrm{M}_{\alpha}$, of planar polygons is a collection of all closed piece-wise linear paths in the plane up to orientation preserving isometries with side lengths $\alpha_{1}, \alpha_{2},\dots, \alpha_{m}$, i.e., 
$$\mathrm{M}_{\alpha} = \{(v_{1},v_{2},\dots,v_{m})\in (S^{1})^{m} : \displaystyle\sum_{i=1}^{m}\alpha_{i}v_{i} = 0 \}/\mathrm{SO}_{2}.$$ 
\end{definition}

It is known that (see, \cite[Theorem 1.3]{FAR}) the moduli space $\mathrm{M}_{\alpha}$ is a closed, smooth manifold of dimension $m-3$.
The planar polygon space $\mathrm{M}_{\alpha}$ admits an involution $\tau$ defined by
\begin{equation}\label{invo}
    \tau(v_{1},v_{2},\dots,v_{m})=(\bar{v}_{1},\bar{v}_{2},\dots,\bar{v}_{m}),
\end{equation}
where $\bar{v}_{i}=(x_{i},-y_{i})$ and $v_{i}=(x_{i},y_{i})$. Geometrically, $\tau$ maps a polygon to its reflected image across the $X$-axis. 
Since the length vector is generic, the involution $\tau$ does not have fixed points. 
It is easy to see that  $\mathrm{M}_{\alpha}$ is a double cover of $\overline{\mathrm{M}}_{\alpha}$. 

We denote the set $ \{1,\dots, m\}$ by $[m]$. 
An ordered partition of $[m]$ into $k$-blocks is a tuple $(J_{1},J_{2},\dots,J_{k})$, where $J_{i}$'s are pairwise disjoint subsets of $[m]$ whose union is $[m]$. 
The set of all ordered partitions of $[m]$ forms a poset under the refinement order relation. We consider now a special class of partitions, called a cyclically ordered partition, which will enable us to define a regular cell structure on $\mathrm{M}_{\alpha}$.

\begin{definition}\label{cop} A cyclically ordered partition of $[m]$ is an ordered partition $(J_{1},J_{2},\dots,J_{k})$ which is equivalent
to any ordered partition obtained from it by a cyclic permutation of its blocks, i.e.,  $(J_{1},J_{2},\dots,J_{k})$,  $(J_{2},J_{3},\dots,J_{k},J_{1})$,   $\dots,  (J_{k},J_{1},\dots,J_{k-1})$ are all equivalent. 
\end{definition}

\begin{definition}\label{adp}
Given a length vector $\alpha$ a subset $I\subset [m]$ is called  $\alpha$-\emph{short} if 
\[\sum_{i\in I} \alpha_i  < \sum_{j \not \in I} \alpha_j.\] 

A cyclically ordered partition $(J_{1},J_{2},\dots,J_{k})$ of the set $[m]$ is said to be $\alpha$-admissible if $J_{i}$ is $\alpha$-short for all $1\leq i\leq k$.	
\end{definition}

Panina \cite{GP} described a regular cell structure on $\mathrm{M}_{\alpha}$. 
The $k$-cells of this complex correspond to $\alpha$-admissible partitions of $[m]$ into $k+3$ blocks. 
The boundary relations on the cells are described by the partition refinement. 
Let us denote this cell structure by $\mathrm{K}_{\alpha}$.

It is proved \cite[Proposition 2.12]{GP} that the complex $\mathrm{K}_{\alpha}$ is a simple cell complex.



\begin{proposition}
 For a generic $\alpha$ the dual of $\mathrm{K}_{\alpha}$ is zonotopal. 
\end{proposition}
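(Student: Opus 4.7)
The plan is to prove that every dual cell of $\mathrm{K}_{\alpha}$ is combinatorially isomorphic to a product of permutahedra, which yields zonotopality since each permutahedron is a zonotope and products of zonotopes are zonotopes. Fix a $k$-cell $\sigma$ of $\mathrm{K}_{\alpha}$, which by Panina's description corresponds to an admissible cyclically ordered partition $(K_{1}, K_{2}, \dots, K_{k+3})$ of $[m]$, and unpack the combinatorics of the dual cell $\sigma^{\ast}$: its face poset is the opposite of the poset of cells of $\mathrm{K}_{\alpha}$ that contain $\sigma$ as a face. Such super-cells correspond to cyclic refinements of $(K_{1}, \dots, K_{k+3})$ --- each $K_{i}$ is replaced by an ordered partition of $K_{i}$, with the refined blocks inserted consecutively in the original cyclic order.

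Next I would establish the key combinatorial reduction: for a generic length vector, the $\alpha$-short condition is monotone under inclusion, since $I$ is $\alpha$-short iff $\sum_{i \in I} \alpha_{i} < \tfrac{1}{2} \sum_{j} \alpha_{j}$, and this inequality only improves when one passes to a subset. Because each $K_{i}$ is already $\alpha$-short by admissibility of $\sigma$, every cyclic refinement of $(K_{1}, \dots, K_{k+3})$ is automatically $\alpha$-admissible, with no further constraint. Consequently, the poset of cells of $\mathrm{K}_{\alpha}$ containing $\sigma$ decouples as a direct product $\prod_{i=1}^{k+3} \mathcal{P}_{i}$, where $\mathcal{P}_{i}$ is the poset of ordered set-partitions of $K_{i}$ under refinement.

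To finish, I would invoke the standard identification of $\mathcal{P}_{i}$ with the face poset of the permutahedron $\mathrm{Perm}(K_{i})$ on the set $K_{i}$, a polytope of dimension $|K_{i}|-1$: vertices correspond to total orderings (ordered partitions into singletons), the whole polytope corresponds to the one-block partition, and finer partitions correspond to lower-dimensional faces. This identifies the face poset of $\sigma^{\ast}$ with that of $\prod_{i=1}^{k+3} \mathrm{Perm}(K_{i})$, whose dimension $\sum_{i} (|K_{i}|-1) = m-(k+3) = (m-3)-k$ matches $\dim \sigma^{\ast}$ as expected. Since each permutahedron is a zonotope (it is the Minkowski sum of the segments joining standard basis vectors) and products of zonotopes are zonotopes, $\sigma^{\ast}$ is combinatorially a zonotope, and $\sigma$ was arbitrary. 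I expect no serious obstacle; the two points that require care are respecting the cyclic (rather than linear) structure when parametrizing refinements of $(K_{1},\dots,K_{k+3})$, and remembering that the face poset of the dual cell is the \emph{opposite} of the star of $\sigma$ in $\mathrm{K}_{\alpha}$.
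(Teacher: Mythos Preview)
Your proposal is correct and takes essentially the same approach as the paper: the paper's proof simply refers to \cite[Section 2]{GP} for the fact that each dual cell is a product of permutahedra, and you have spelled out exactly that argument. The monotonicity of the $\alpha$-short condition, the decoupling of refinements into a product over blocks, and the identification with the face poset of a product of permutahedra are precisely the ingredients Panina uses.
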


\begin{proof}
We refer the reader to \cite[Section 2]{GP} where it is proved that each dual cell is a product of permutohedra (it is a particular type of simple zonotope). 
\end{proof}

\begin{remark}\label{ppsscc}
 It is clear that the involution $\tau$ (\Cref{invo}) defined on $\mathrm{M}_{\alpha}$ is cellular; the cell labeled by $(I_{1},I_{2},\dots,I_{k})$ mapped to the cell labeled by $(I_{k},I_{k-1},\dots,I_{2},I_{1})$.
 Therefore, $\overline{\mathrm{M}}_{\alpha}$ has a simple cell structure with cells labeled by bi-cyclically ordered $\alpha$-admissible partitions. 
 We denote this cell structure on $\overline{\mathrm{M}}_{\alpha}$ by $\overline{\mathrm{K}}_{\alpha}$.  
 Note that the dual of this cell complex is zonotopal. 
\end{remark}

We now know that for a generic
$\alpha$ the complex $\kbalpha$ satisfies the premise of \Cref{Dav}.
So we find out whether or not the top-cells of $\kbalpha$ are flagtopes. 
We do this analysis by considering the number of sides. 
If $m=3$ then there is only one possibility, $\overline{\mathrm{K}}_{\alpha}$ is always a point. 
If $m=4$ then again $\overline{\mathrm{K}}_{\alpha}\cong S^{1}$ for any generic $\alpha$. 
If $m=5$ then $\overline{\mathrm{K}}_{\alpha}$ is either a torus or the nonorientable surfaces of genus $1,2,3,4$ and $5$. 
It is not difficult to verify that in case of genus $5$ all of the $12$ top-cells are pentagonal, hence they are flagtopes. 
However, in all other cases the cell structure contains at least one triangular top-cell, see \cite[Section 2]{GP} for details. 
So we can't appeal to \Cref{Dav} in this case. 
In fact, for the same reason we can't use this theorem for the larger values of $m$.

\begin{proposition}\label{thmfails}
Let $m\geq 6$ and $\alpha$ be a generic length vector with $m$ components. 
Then, the cell structure $\kbalpha$ contains no top-cell isomorphic to a flagtope. 
\end{proposition}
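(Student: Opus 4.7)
The strategy is to combine a simple upper bound on the number of facets of any top cell of $\kbalpha$ with the well-known lower bound on facets of a flagtope.

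A top cell of $\kbalpha$ is labeled by an admissible bi-cyclically ordered partition of $[m]$ into singletons (the finest such partition, which is always admissible for generic $\alpha$ because each $\alpha_i$ is short). Its codim-$1$ boundary cells are the admissible coarsenings in which exactly one pair of cyclically adjacent singletons is merged into a block of size $2$; such a coarsening is admissible iff the merged pair is $\alpha$-short. Hence each top cell has at most $m$ facets, one per adjacent pair in the cyclic order. I would begin the proof by recording this bijection between facets and $\alpha$-short adjacent pairs.

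Next I would invoke the standard fact that an $n$-dimensional flagtope has at least $2n$ facets, with equality attained uniquely by the $n$-cube. Since a top cell has dimension $m-3$, being a flagtope forces at least $2(m-3) = 2m-6$ facets. Combined with the upper bound of $m$, this requires $m \geq 2(m-3)$, i.e.\ $m \leq 6$. In particular, for $m \geq 7$ the inequality already fails, so no top cell has enough facets to support a flag structure, and the proposition follows immediately.

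The borderline case $m=6$ is the main obstacle. Here both bounds coincide at $6$, and a flagtope top cell would necessarily have all six adjacent pairs $\alpha$-short and be combinatorially the $3$-cube (the unique $3$-flagtope with six facets). To rule this out I would examine the facet intersection structure: two facets $F_{\{i,i+1\}}$ and $F_{\{i+1,i+2\}}$ meet in a codim-$2$ face iff $\{i,i+1,i+2\}$ is $\alpha$-short, while disjoint facet pairs always meet (the resulting pair of size-$2$ blocks is automatically admissible). Being a $3$-cube forces the facet-intersection graph to be the octahedron $K_{2,2,2}$, equivalently the three non-intersecting adjacent facet pairs must form a perfect matching on the cyclic facet graph, i.e.\ the six consecutive triples in the cyclic ordering must alternate short/long. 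The hard part is deriving a contradiction from this alternating pattern together with the admissibility and genericity hypotheses—exploiting both the pattern of triple sums and the absence of certain codim-$3$ cells forced by the cube structure—and this combinatorial case analysis is where I expect the argument to be most delicate.
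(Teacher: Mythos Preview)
For $m \geq 7$ your counting argument is exactly the paper's: a top cell of $\kbalpha$ has at most $m$ facets while an $(m-3)$-dimensional flagtope needs at least $2(m-3)$, forcing $m \leq 6$. You also correctly reduce the $m=6$ case to ruling out a $3$-cube top cell, and your observation that the cube's opposite-facet structure forces the six consecutive cyclic triples to be alternately short and long is sharper and more structural than the paper's treatment. The paper instead works concretely: it assumes the cell $\sigma = (1,2,3,4,5,6)$ is a cube, lists the possible edges of the three $2$-faces $(2,3,4,5,16)$, $(1,2,34,5,6)$, $(1,2,3,45,6)$, and runs a short case split on whether $156$ and $126$ are short, aiming to force one of these faces to be a triangle.

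The genuine gap is that your $m=6$ contradiction is only promised, not produced --- and it cannot be produced along the line you sketch. The alternating pattern is realizable: for $\alpha = (1,\varepsilon,1,\varepsilon,1,\varepsilon)$ with small $\varepsilon>0$ (which is generic), all six adjacent pairs are short, the triples $\{1,2,3\},\{3,4,5\},\{5,6,1\}$ are long while $\{2,3,4\},\{4,5,6\},\{6,1,2\}$ are short, and one checks directly that the cell $(1,2,3,4,5,6)$ then has exactly eight admissible vertex partitions and facet-intersection graph $K_{2,2,2}$, hence is combinatorially a $3$-cube. So no contradiction follows from the alternating pattern, nor from deeper vertex constraints. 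The paper's own final step --- that $(1,2,3,45,6)$ must be a $2$-simplex once $126$ is short --- also fails for this $\alpha$, since $456$ is short there and the face is a square. You should test both your outline and the paper's case analysis against this example before investing further effort in the $m=6$ argument.
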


\begin{proof}
It is straightforward to see that any top-dimensional (i.e. $(m-3)$-dimensional) cell of $\kbalpha$ has at most $m$ facets. 
Therefore, the only possible values of $m$ for which the number of facets are greater than or equal to $2(m-3)$ are $m=3,4,5$ and $6$. 
Therefore, if $m\geq 7$ then none of the top-dimensional cells of $\overline{\mathrm{K}}_{\alpha}$ can be combinatorially isomorphic to a flagtope.
The cases $m=3,4,5$ are dealt above, so here we deal with the case $m = 6$, where the top-dimensional cells of $\kbalpha$ may have $6$ facets.

From \cite[Theorem 3]{GR} we know that there is only one $3$-dimensional flagtope with $6$ facets and that is a $3$-cube.
We now show that it is impossible for the corresponding $\kbalpha$ to have each top-dimensional cell of $\overline{\mathrm{K}}_{\alpha}$ isomorphic to a $3$-cube.
To see this assume the contrary that a cell denoted by $\sigma=(1,2,3,4,5,6)$ is combinatorially isomorphic to a $3$-cube. 
We write $\{i,j\}$ as $ij$ for short. 
Then $\sigma$ have following $6$ facets, $$(12,3,4,5,6), (1,23,4,5,6), (1,2,34,5,6), (1,2,3,45,6), (1,2,3,4,56), (2,3,4,5,16)$$ which are $2$-cubes. 
Consider the following $2$-faces and their edges. 
\begin{enumerate}
\item  The possible faces of $(2,3,4,5,16)$ : $$(23,4,5,16), (2,34,5,16) ,(2,3,45,16), (2,3,4,156) \hspace{.1cm} \text{or} \hspace{.1cm} (3,4,5,126)$$

\item The possible faces of $(1,2,34,5,6)$ : $$(12,34,5,6), (1,2,34,56), (2,34,5,16), (1,234,5,6) \hspace{.1cm} \text{or} \hspace{.1cm} (1,2,345,6)$$

\item The possible faces of $(1,2,3,45,6)$  : $$(12,3,45,6), (1,23,45,6), (2,3,45,16), (1,2,345,6) \hspace{.1cm} \text{or} \hspace{.1cm} (1,2,3,456)$$
\end{enumerate}

Note that $156$ cannot be short. 
Otherwise both $(1,2,34,5,6)$ and $(1,2,3,45,6)$ will be isomorphic to $2$-simplex, because then $234$, $345$ and $456$ will be long subsets. 
Since we assumed $(2,3,4,5,16)$ is a $2$-cube, $126$ must be short. But then a $2$-face $(1,2,3,45,6)$ will be isomorphic to a $2$-simplex. 
Which is contradiction. 
Therefore, if $m=6$, it is impossible to have each top-dimensional cell of $\overline{\mathrm{K}}_{\alpha}$ isomorphic to a $3$-cube. 
\end{proof}

Although the natural cell structure of $\kbalpha$ is simple we cannot apply the Davis-Januszkiewicz-Scott schema as the top-cells are not flagtopes. 
However, some of these polygon spaces are small covers and the Davis-Januszkiewicz-Scott theorem can be applied to this situation since they have a natural cell structure tiled by the moment polytope. 
If the moment polytope is a flagtope then the corresponding small cover is aspherical (see \cite[Theorem 2.2.5]{MTR}). 

Hausmann and Roudrigue \cite[Proposition 6.8]{JE} establish a sufficient condition for spatial polygon space  $\mathrm{N}_{\alpha}$ to be a toric manifold, which we now state. 

\begin{theorem}\label{scrtv}
Let $\alpha=(\alpha_{1},\alpha_{2},\dots,\alpha_{m})$ be a generic length vector with $\alpha_{m}\geq  \sum_{i=1}^{m-5}\alpha_{i}$ then $\mathrm{N}_{\alpha}$ is diffeomorphic to a toric manifold.
\end{theorem}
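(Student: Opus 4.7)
The plan is to exhibit a half-dimensional effective Hamiltonian torus action on $\mathrm{N}_{\alpha}$ via the bending flows of Kapovich and Millson and then verify that the hypothesis $\alpha_{m}\geq \sum_{i=1}^{m-5}\alpha_{i}$ guarantees the associated moment-map image is a \emph{simple} convex polytope. Once both ingredients are in place, \cite[Theorem 7.5]{MT} delivers an equivariant diffeomorphism between $\mathrm{N}_{\alpha}$ and a non-singular toric variety, which is precisely what is required.

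First I would fix a triangulation of the abstract $m$-gon by $m-3$ non-crossing diagonals $D_{1},\dots,D_{m-3}$. For each diagonal $D_{k}$, rotating one of the two sub-polygons determined by $D_{k}$ about the axis through $D_{k}$ while keeping the other sub-polygon rigid defines a Hamiltonian $S^{1}$-action on $\mathrm{N}_{\alpha}$ whose moment map is essentially the length of $D_{k}$. Because non-crossing diagonals produce commuting bending flows, these $S^{1}$-actions assemble into a Hamiltonian $T^{m-3}$-action, which is half-dimensional since $\dim_{\C}\mathrm{N}_{\alpha}=m-3$. For the proof I would choose the fan triangulation whose diagonals separate the edge subset $\{1,2,\dots,k\}$ from $\{k+1,\dots,m\}$ for $k=2,\dots,m-2$, because this triangulation is adapted to the asymmetric role of $\alpha_{m}$ in the hypothesis. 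The image of the corresponding moment map is then cut out by the triangle inequalities coming from each of the $m-2$ triangles of the triangulation, and hence is a convex polytope by Atiyah--Guillemin--Sternberg.

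The main obstacle is verifying that this polytope is simple, i.e., that at every vertex exactly $m-3$ of the defining triangle inequalities are tight and that they meet transversally. When $\alpha_{m}$ is comparatively small, several diagonal lengths can be forced to collapse simultaneously, producing higher-degree vertices that destroy simplicity. The role of the hypothesis $\alpha_{m}\geq \sum_{i=1}^{m-5}\alpha_{i}$ is precisely to preclude such collisions: it forces every initial segment $\{1,2,\dots,j\}$ with $j\leq m-5$ to be strictly $\alpha$-short, so the corresponding triangle inequalities for consecutive diagonals cannot become tight together, and only the last few diagonals can interact nontrivially. A finite case analysis at those remaining diagonals, combined with genericity of $\alpha$, would then show that each vertex of the moment image is the transverse intersection of exactly $m-3$ facet hyperplanes. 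With an effective Hamiltonian $T^{m-3}$-action on the $2(m-3)$-dimensional symplectic manifold $\mathrm{N}_{\alpha}$ and a simple moment polytope in hand, the Delzant-type classification recorded in \cite[Theorem 7.5]{MT} finishes the argument.
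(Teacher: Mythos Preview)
The paper does not supply its own proof of this statement; it is quoted from Hausmann--Rodriguez \cite[Proposition~6.8]{JE}, so there is no in-paper argument to compare your proposal against. That said, your plan has a genuine gap.

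You write that bending about a diagonal $D_{k}$ ``defines a Hamiltonian $S^{1}$-action on $\mathrm{N}_{\alpha}$'', but this is only correct on the open dense locus where $|D_{k}|>0$: when the diagonal collapses to length zero there is no axis about which to rotate, and for an arbitrary generic $\alpha$ the bending flow does \emph{not} extend to a smooth circle action across that locus. Consequently the claim that ``these $S^{1}$-actions assemble into a Hamiltonian $T^{m-3}$-action'' on all of $\mathrm{N}_{\alpha}$ is exactly the assertion to be established, not a preliminary observation. Your identification of ``the main obstacle'' as simplicity of the moment image is therefore misplaced: once a global effective half-dimensional Hamiltonian torus action is already in hand, Delzant's theorem forces the moment image to be a simple (indeed Delzant) polytope automatically, so a vertex-by-vertex analysis of triangle inequalities does not touch the real difficulty. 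A repairable version of your outline would argue directly that the hypothesis $\alpha_{m}\geq\sum_{i=1}^{m-5}\alpha_{i}$ prevents the chosen fan-triangulation diagonals from ever reaching length zero, so that the bending action is genuinely global; that is precisely the step you have omitted. The route taken in \cite{JE} is different still: the hypothesis is used to place $\alpha$ in a chamber where $\mathrm{N}_{\alpha}$ can be identified with a manifestly toric model (an abelian polygon space, equivalently an iterated projective-line bundle), and it is that identification---not a facet count---that carries the argument.
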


Recall that $\overline{\mathrm{M}}_{\alpha}$ is a fixed point set of an anti-syplectic involution on $\mathrm{N}_{\alpha}$. 
Hence when $\alpha$ satisfies the condition of \Cref{scrtv} corresponding polygon space is a small cover.

\begin{remark}\label{tvobs}
{\normalfont For a generic length vector $\alpha=(\alpha_{1},\alpha_{2},\dots,\alpha_{m})$, the half-dimensional torus action on $\mathrm{N}_{\alpha}$ is given by bending flows (see \cite[Section 5]{hkgrasmann}, \cite[Section 6]{JE}).
They described the moment polytope using triangle inequalities.

However, in general it is hard to characterize the face poset of the moment polytope using those inequalities. 
For example, we cannot determine the number of facets of the moment polytope, one of the important information to determine whether a simple polytope is flagtope or not.
So, whenever $\mathrm{N}_{\alpha}$ is a toric manifold, it is not straightforward to use \Cref{artv} to conclude whether $\overline{\mathrm{M}}_{\alpha}$ is aspherical or not. }
\end{remark}

In view of this discussion, we now focus on a particular sub-class of planar polygon spaces, called chain spaces (or abelian polygon spaces), because they admit a torus action and it is possible to explicitly describe their moment polytope.

\section{The short code of a chain space}\label{tsc}

As stated above we now focus solely on chain spaces (i.e., real abelian polygon spaces). 
In this section, we introduce the notion of a combinatorial object associated with generic length vector, called the short code. 
The short code of a generic length vector is closely related to the \emph{genetic code} defined by Hausmann in \cite[Section 1.5]{hgdps} in the context of polygon spaces.
We also show that chain spaces are planar polygon spaces, up to diffeomorphism.

Since the diffeomorphism type of a chain space does not depend on the ordering of its side lengths, we assume that our (generic) length vector $\alpha=(\alpha_{1},\alpha_{2},\dots,\alpha_{m-1},\alpha_{m})$ satisfies $\alpha_{1}\leq \alpha_{2} \leq \dots\leq \alpha_{m-1}$. 
Note that the only restriction on $\alpha_m$ is that it is less than the sum $\sum_{i=1}^{m-1} \alpha_i$.

\begin{definition}\label{tscmglv}
For a generic length vector $\alpha$, we define the following collection of subsets of $[m]$:
$$ S_{m}(\alpha) = \{J\subset [m] : m\in J \hspace{1mm} \text{and}\hspace{1mm} J \hspace{1mm} \text{is}\hspace{1mm} \alpha-\text{short}\}. $$

A partial order $\leq$ is defined on $S_{m}(\alpha)$ by declaring $I\leq J$ if $I=\{i_{1},\dots,i_{t}\}$ and $\{j_{i},\dots,j_{t}\}\subseteq J$ with $i_{s}\leq j_{s}$ for $1\leq s\leq t$. 
The \emph{short code} of $\alpha$ is the set of maximal elements of $S_{m}(\alpha)$ with respect to this partial order. 
If $A_{1}, A_{2},\dots, A_{k}$ are the maximal elements of $S_{m}(\alpha)$, we denote the short code as $<A_{1},\dots,A_{k}>$.
\end{definition}

\begin{example}
The short codes of length vectors $(1,1,3,3,3)$ and $(1,2,2,5,3)$ are $<\{1,2,5\}>$ and $<\{1,3,5\}>$ respectively.
\end{example}

Given a generic length vector $\alpha=(\alpha_{1},\alpha_{2},\dots,\alpha_{m-1},\alpha_{m})$ of a chain space and a positive real number $\delta > \sum_{i=1}^{m-1}\alpha_{i}$, define a new generic length vector (for a polygon space) as

\begin{equation}\label{alphaprime}
\alpha^{\prime} = (\alpha_{1},\alpha_{2},\dots,\alpha_{m-1},\delta,\alpha_{m}+\delta).
\end{equation}

It was shown in \cite{JA} that the spatial polygon space $\mathrm{N}_{\alpha^{\prime}}$ is a toric manifold. 
In particular, the authors proved that the moment polytope of the abelian polygon space corresponding to $\alpha$ and that of $\mathrm{N}_{\alpha^{\prime}}$ are isomorphic \cite[Proposition 1.3]{JA}. 
Hence the two spaces are diffeomorphic. 
Here we prove the planar version of their result by providing an explicit diffeomorphism.

\begin{proposition}\label{cspps}
Let $\alpha$ be a generic length vector and $\alpha^{\prime}$ be the vector defined in \Cref{alphaprime}. 
Then the corresponding chain space $\mathrm{Ch}(\alpha)$ is diffeomorphic to the planar polygon space $\overline{\mathrm{M}}_{\alpha^{\prime}}$. 
\end{proposition}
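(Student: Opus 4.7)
The plan is to build an explicit diffeomorphism $\Phi:\mathrm{Ch}(\alpha)\to\overline{\mathrm{M}}_{\alpha'}$ by closing each chain into an $(m+1)$-gon with two additional edges of lengths $\delta$ and $\alpha_m+\delta$.

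Given a chain representative $(v_1,\dots,v_{m-1})$, set $P_{m-1}:=\sum_{i=1}^{m-1}\alpha_i v_i=(\alpha_m,y)$, and look for unit vectors $v_m,v_{m+1}\in S^1$ satisfying the closing equation
\[
\delta v_m+(\alpha_m+\delta)v_{m+1}=-P_{m-1}.
\]
Since $(P_{m-1})_x=\alpha_m$ we have $|P_{m-1}|\geq\alpha_m$, while the hypothesis $\delta>\sum_{i=1}^{m-1}\alpha_i\geq|P_{m-1}|$ gives $|P_{m-1}|<\delta$, so the triangle inequalities for sides $\delta,\alpha_m+\delta,|P_{m-1}|$ are strict and the closing equation has two solutions — the two orientations of the closing triangle with vertices $0,P_{m-1},-(\alpha_m+\delta)v_{m+1}$. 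I would select a consistent one by a rule tied to the sign of $y$ (for instance, the counter-clockwise orientation when $y\geq 0$ and the clockwise one when $y<0$) and define $\Phi([v_1,\dots,v_{m-1}])$ to be the $\mathrm{O}_2$-orbit of the resulting tuple $(v_1,\dots,v_{m-1},v_m,v_{m+1})$.

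Next I would verify: \emph{(i)} well-definedness on the $\mathbb{Z}_2$-quotient — the chain involution $(v_i)\mapsto(\bar v_i)$ flips the sign of $y$, and therefore flips the orientation convention, but the resulting polygon is the $x$-axis reflection of the first one and hence $\mathrm{O}_2$-equivalent to it; \emph{(ii)} smoothness on the open set $\{y\ne 0\}$, which is immediate from solving the closing equation explicitly, since $v_m$ satisfies the affine condition $\alpha_m x_m+y y_m=\alpha_m-y^2/(2\delta)$ on $S^1$; and \emph{(iii)} bijectivity, via the natural inverse: given a polygon representative, the triangle inequality applied to the last two edges and the closing base forces $|P_{m-1}|\geq\alpha_m$, so the $\mathrm{O}_2$-action lets one rotate $P_{m-1}$ onto the line $x=\alpha_m$; reading off $(v_1,\dots,v_{m-1})$ produces a chain, and the residual sign freedom in this normalization matches the $\mathbb{Z}_2$-quotient in $\mathrm{Ch}(\alpha)$.

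The main obstacle I expect is verifying smoothness across the codimension-one locus $\{y=0\}$, where the closing triangle degenerates to the flat configuration $v_m=e_1$, $v_{m+1}=-e_1$ and the two orientation branches coalesce; the orientation convention must be shown to glue smoothly from the two sides of this locus. I would handle it by a direct local calculation using the quadratic in $y_m$ obtained by substituting $x_m=(\alpha_m-yy_m-y^2/(2\delta))/\alpha_m$ into $x_m^2+y_m^2=1$: its discriminant vanishes precisely at $y=0$, producing the unique limit, and a power-series expansion in $y$ around this locus shows that the selected solution depends smoothly on $y$ through both signs. Granting this, bijectivity together with the smoothness of $\Phi$ and its inverse yields the desired diffeomorphism.
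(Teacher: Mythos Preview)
Your approach is essentially the same as the paper's: both close the chain into an $(m+1)$-gon by adjoining the unique triangle with sides $|P_{m-1}|,\ \delta,\ \alpha_m+\delta$ and declare $\phi([v_1,\dots,v_{m-1}])=[(v_1,\dots,v_{m-1},\vec w,\vec x)]$. The paper verifies the same strict triangle inequalities and then simply asserts that the resulting $\phi$ is a diffeomorphism; you are in fact being more careful than the paper about the orientation choice between the two closings and about smoothness at the degenerate locus $y=0$, and your outline for handling both points is sound.
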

\begin{proof}
 Given a chain with side lengths $\alpha_{1},\alpha_{2},\dots,\alpha_{m-1}$ we can associate unique polygon with side lengths $\alpha_{1},\alpha_{2}\dots\alpha_{m-1},\delta,\alpha_{m}+\delta$ in the following way: 

\begin{figure}[H]\label{fig1}
	\centering
	$\includegraphics[scale=1 ]{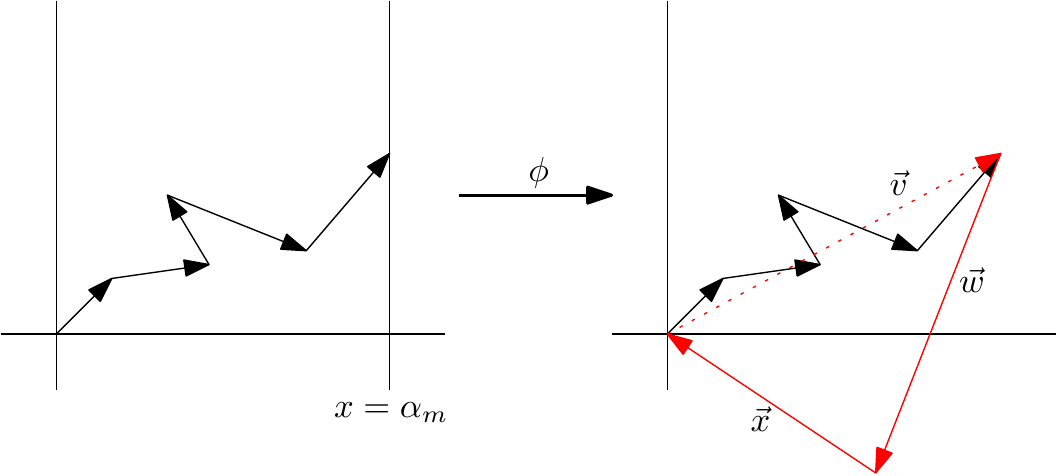}$
	\caption{}
\end{figure}

Let $(v_{1},v_{2},\dots, v_{m-1})\in \mathrm{Ch}(\alpha)$ and let $\vec{v}=\sum_{i=1}^{m-1} v_{i}$. 
We have following inequalities : $$|\vec{v}| + (\delta + \alpha_{m}) > \delta , \hspace{.1cm} \delta + (\delta + \alpha_{m}) > |\vec{v}|, \hspace{.1cm} |\vec{v}| + \delta > \delta + \alpha_{m}$$ as $\vec{v}=(\alpha_{m},y)$ for some $y\in \mathbb{R}$. 
Therefore, the sides lengths $|\vec{v}|$, $\delta$ and $\delta + \alpha_{m}$ satisfy the triangle inequalities. 
In fact there exist unique triangle (see \Cref{fig1}) with direction vectors $\vec{v}$, $\vec{w}$ and $\vec{x}$ up to isometries. 
Consequently, we have an $m+1$-gon $(v_{1},v_{2},\dots,v_{m-1},\vec{w},\vec{x})$ with side length $\alpha_{1},\alpha_{2},\dots,\alpha_{m-1},\delta,$  $\alpha_{m}+\delta$.  
Its not hard to see that the map $$\phi : \mathrm{Ch}(\alpha)\longrightarrow \overline{\mathrm{M}}_{\alpha^{\prime}} $$ defined by $$\phi((v_{1},v_{2},\dots,v_{m-1}))=(v_{1},v_{2},\dots,v_{m-1},\vec{w},\vec{x})$$ is a diffeomorphism.   
\end{proof} 

\begin{remark}\label{scgc}
If the short code of a generic length vector $\alpha=(\alpha_{1},\alpha_{2},\dots,\alpha_{m-1},\alpha_{m})$ is $<A_{1}\cup\{m\},A_{2}\cup\{m\},\dots,A_{k}\cup\{m\}>$ where $A_{i}\subseteq [m-1]$ for all $1\leq i\leq k$ then $<A_{1}\cup\{m+1\},A_{2}\cup\{m+1\},\dots,A_{k}\cup\{m+1\}>$ is the genetic code of the length vector $\alpha^{\prime}$ defined in Equation \ref{alphaprime}.
\end{remark}

\begin{proposition}
Let $\alpha$ and $\beta$ be two generic length vectors with the same short code. Then the corresponding chain spaces $\mathrm{Ch}(\alpha)$ and $\mathrm{Ch}(\beta)$ are diffeomorphic.
\end{proposition}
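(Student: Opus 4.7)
The plan is to reduce the assertion to Hausmann's classical theorem that the diffeomorphism type of a polygon space is determined by its genetic code \cite[Section 1.5]{hgdps}, using the short-code/genetic-code dictionary established in \Cref{scgc} together with the identification of chain spaces with planar polygon spaces given by \Cref{cspps}.

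Concretely, I would proceed in three steps. First, apply \Cref{cspps} to both $\alpha$ and $\beta$: choose real numbers $\delta_{\alpha}>\sum_{i=1}^{m-1}\alpha_{i}$ and $\delta_{\beta}>\sum_{i=1}^{m-1}\beta_{i}$ (generic, if necessary, so that the resulting length vectors are themselves generic) and form the extended vectors $\alpha^{\prime}$ and $\beta^{\prime}$ as in \Cref{alphaprime}. This yields canonical diffeomorphisms $\mathrm{Ch}(\alpha)\cong \overline{\mathrm{M}}_{\alpha^{\prime}}$ and $\mathrm{Ch}(\beta)\cong \overline{\mathrm{M}}_{\beta^{\prime}}$. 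Second, invoke \Cref{scgc}: if the common short code of $\alpha$ and $\beta$ is $<A_{1}\cup\{m\},\dots,A_{k}\cup\{m\}>$, then the extended vectors $\alpha^{\prime}$ and $\beta^{\prime}$ share the genetic code $<A_{1}\cup\{m+1\},\dots,A_{k}\cup\{m+1\}>$. Third, Hausmann's theorem produces a diffeomorphism $\overline{\mathrm{M}}_{\alpha^{\prime}}\cong \overline{\mathrm{M}}_{\beta^{\prime}}$, and composing the three diffeomorphisms yields the desired $\mathrm{Ch}(\alpha)\cong \mathrm{Ch}(\beta)$.

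The only point requiring care is that Hausmann's theorem is most commonly stated for the spatial polygon spaces $\mathrm{N}_{\alpha^{\prime}}$ and $\mathrm{N}_{\beta^{\prime}}$, whereas we need the corresponding statement for the planar versions $\overline{\mathrm{M}}_{\alpha^{\prime}}$ and $\overline{\mathrm{M}}_{\beta^{\prime}}$. This is not a genuine obstacle: the diffeomorphism of spatial polygon spaces constructed in \cite{hgdps} is built from wall-crossings that depend only on which subsets are short, and it is equivariant with respect to the anti-symplectic (complex conjugation) involution whose fixed set is the planar polygon space; it therefore restricts to a diffeomorphism of the corresponding fixed point sets. Aside from this verification, the argument is essentially a bookkeeping translation between the short-code formalism introduced here and the genetic-code formalism of \cite{hgdps}.
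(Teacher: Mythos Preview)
Your proposal is correct and follows essentially the same route as the paper: pass to the extended vectors $\alpha'$, $\beta'$ via \Cref{cspps}, observe via \Cref{scgc} that they share a genetic code, and invoke the known fact that the genetic code determines the diffeomorphism type of the planar polygon space. The only minor difference is that where you argue the planar case by restricting the spatial diffeomorphism to the fixed set of the involution, the paper simply cites \cite[Lemma~4.2]{JE} and \cite[Lemma~1.2]{hgdps}, which give the planar statement directly.
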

\begin{proof}
Consider $\alpha^{\prime}$ and $\beta^{\prime}$ be two generic length vectors defined as in \Cref{alphaprime}. 
Note that the genetic codes of $\alpha^{\prime}$ and $\beta^{\prime}$ coincide, since the short code of $\alpha$ and $\beta$ are same. 
Consequently \cite[Lemma 4.2]{JE} and \cite[Lemma 1.2 ]{hgdps} give that the the planar polygon spaces $\overline{\mathrm{M}}_{\alpha^{\prime}}$ and $\overline{\mathrm{M}}_{\beta^{\prime}}$ corresponding to $\alpha^{\prime}$ and $\beta^{\prime}$ are diffeomorphic. 
It follows from the \Cref{cspps} that $\mathrm{Ch}(\alpha^{\prime})$ and $\mathrm{Ch}(\beta^{\prime})$ are diffeomorphic.
\end{proof}

\begin{corollary}\label{m-2torus}
Let $<\{1,2,\dots,m-2,m\}>$ be the short code of a length vector $\alpha$. Then the corresponding chain space $\mathrm{Ch}(\alpha)$ is diffeomorphic to the $(m-2)$-dimensional torus $T^{m-2}$.  
\end{corollary}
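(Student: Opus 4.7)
The plan is to exhibit an explicit diffeomorphism $\mathrm{Ch}(\alpha) \to T^{m-2}$ rather than route through the polygon-space side. First I would unpack the short code. With $\alpha_1 \leq \cdots \leq \alpha_{m-1}$, the statement that $\{1,2,\dots,m-2,m\}$ is $\alpha$-short translates to the single inequality
\[
\alpha_1 + \alpha_2 + \cdots + \alpha_{m-2} + \alpha_m < \alpha_{m-1}.
\]
(The maximality condition is automatic: any larger subset containing $m$ would also have to contain $m-1$, but then its sum exceeds $\alpha_{m-1} > \sum_{i \neq m-1} \alpha_i$.) This inequality is the only analytic input we need.

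Next I would use the inequality to solve uniquely for $v_{m-1}$. Writing $v_i = (x_i, y_i)$, the chain constraint gives $x_{m-1} = \alpha_{m-1}^{-1}\bigl(\alpha_m - \sum_{i=1}^{m-2}\alpha_i x_i\bigr)$. Because $|x_i| \leq 1$ for $i \leq m-2$, the displayed inequality yields $|x_{m-1}| < 1$ strictly on the entire zero set, so $y_{m-1} = \pm\sqrt{1-x_{m-1}^2}$ gives two distinct, nowhere vanishing values. Consequently the projection
\[
\pi : \Bigl\{(v_1,\dots,v_{m-1}) \in (S^1)^{m-1} : \textstyle\sum \alpha_i x_i = \alpha_m\Bigr\} \longrightarrow (S^1)^{m-2}, \quad (v_1,\dots,v_{m-1}) \mapsto (v_1,\dots,v_{m-2}),
\]
is a smooth trivial double cover.

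Then I would analyze the diagonal $\mathbb{Z}_2$-action. Simultaneous reflection sends the two lifts $(v_1,\dots,v_{m-2},v_{m-1}^\pm)$ of a fixed base point $(v_1,\dots,v_{m-2})$ to the two lifts $(\bar v_1,\dots,\bar v_{m-2}, v_{m-1}^\mp)$ of the reflected base point, never preserving a fiber of $\pi$. Hence every $\mathbb{Z}_2$-orbit contains a unique representative with $y_{m-1} > 0$, and the composite
\[
\mathrm{Ch}(\alpha) \longrightarrow (S^1)^{m-2}, \qquad [(v_1,\dots,v_{m-1})] \mapsto (v_1,\dots,v_{m-2})\ \text{chosen so that}\ y_{m-1} > 0,
\]
is a smooth bijection. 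Its inverse is the section $(v_1,\dots,v_{m-2}) \mapsto (v_1,\dots,v_{m-2}, x_{m-1}, +\sqrt{1-x_{m-1}^2})$, which is smooth because the radicand is bounded away from $0$ by the strict inequality. This gives the desired diffeomorphism with $T^{m-2}$.

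The proof is essentially soft, but the one place that needs genuine care is the interaction between the two-sheeted structure of $\pi$ and the $\mathbb{Z}_2$-quotient. Specifically, one must verify that the involution exchanges rather than fixes the two $v_{m-1}$-lifts over a given point, so that the induced map on the quotient remains bijective; this is exactly where the strict inequality $|x_{m-1}|<1$ is used (ensuring $y_{m-1} \neq 0$, so the two lifts are genuinely distinct). Once that is in place, smoothness in both directions is immediate.
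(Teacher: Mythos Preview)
Your argument is correct and takes a genuinely different route from the paper. The paper proves the corollary by passing through the polygon-space dictionary: it forms the auxiliary length vector $\alpha'$ of \Cref{alphaprime}, observes via \Cref{scgc} that its genetic code is $<\{1,\dots,m-2,m+1\}>$, invokes Kapovich--Millson \cite[Theorem~1]{MJ} to identify $\overline{\mathrm{M}}_{\alpha'}$ with $T^{m-2}$, and then transports the result back via \Cref{cspps}. Your proof, by contrast, is entirely self-contained: you use the short-code inequality $\sum_{i\leq m-2}\alpha_i + \alpha_m < \alpha_{m-1}$ directly to show $|x_{m-1}|<1$ on the whole level set, so that the sign of $y_{m-1}$ cleanly separates the two $\mathbb{Z}_2$-sheets and the projection to $(S^1)^{m-2}$ descends to a diffeomorphism. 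This is more elementary, gives an explicit diffeomorphism, and avoids the external citation; the paper's route has the virtue of fitting the corollary into the chain/polygon correspondence used elsewhere.

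One small imprecision: the phrase ``never preserving a fiber of $\pi$'' is not quite right. Over a base point $(v_1,\dots,v_{m-2})$ fixed by reflection (all $y_i=0$ for $i\leq m-2$), the fiber \emph{is} preserved setwise---the involution just swaps the two lifts. What you actually need, and what you correctly use in the next sentence, is that the involution always exchanges the $y_{m-1}>0$ and $y_{m-1}<0$ sheets (equivalently, acts freely), so each orbit has a unique representative with $y_{m-1}>0$. That follows immediately from $y_{m-1}\neq 0$, and the rest of your argument goes through unchanged.
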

\begin{proof}
Let $\alpha^{\prime}$ be the new length vector defined as in Equation \ref{alphaprime}. 
Using \Cref{scgc} we get the genetic code of $\alpha^{\prime}$; which is $<\{1,2,\dots,m-2,m+1\}>$.
By \cite[Theorem 1]{MJ} we infer that $\overline{\mathrm{M}}_{\alpha^{\prime}}$ is diffeomorphic to $T^{m-2}$. 
Now the corollary follows from Proposition \ref{cspps}.
\end{proof}


\section{The face poset of the moment polytope}\label{fpmp}

Recall that the chain space $\mathrm{Ch}(\alpha)$ is the real part of a toric manifold (see the paragraph after \Cref{dcs}). 
Our aim is to understand the topology of this space via the combinatorics of the associated moment polytpoe. 
In this section we describe the face poset of the moment polytope. 
We show that it is completely  determined by the short code of the corresponding length vector. 
The moment polytopes corresponding to two different length vectors are strongly isomorphic if they have the same short code. 
In particular, the results in this section imply that if the short codes are same then the equations defining the corresponding moment polytopes differ by a constant (equivalently their facets are parallel). 

Given a generic length vector $\alpha$, the moment polytope of the corresponding chain space was first described in \cite{JA} as an intersection of a parallelepiped with a hyperplane. 
We begin with the description of the moment map:
\[\mu: \mathrm{Ch}(\alpha)\longrightarrow \mathbb{R}^{m-1} \]
such that 
$$\mu(v_{1},\dots,v_{m-1})=(\alpha_{1}x_{1},\dots,\alpha_{m-1}x_{m-1})$$ 
where $v_{i}=(x_{i},y_{i})$. 
We have $$\mathrm{P}(\alpha) := \mu(\mathrm{Ch}(\alpha))=\{(w_{1},w_{2},\dots,w_{m-1})\in \prod_{i=1}^{m-1} [-\alpha_{i},\alpha_{i}] : \sum_{i=1}^{m-1} w_{i}=\alpha_{m}\}.$$ 
Let $$\mathrm{C}^{m-1}(\alpha)=\displaystyle\prod_{i=1}^{m-1} [-\alpha_{i},\alpha_{i}]$$ and  $$\mathrm{H}(\alpha)=\{(w_{1},w_{2},\dots,w_{m-1})\in \mathbb{R}^{m-1}:\displaystyle \displaystyle\sum_{i=1}^{m-1}w_{i}=\alpha_{m}\}.$$
Hence the moment polytope $\mathrm{P}(\alpha)=\mathrm{C}^{m-1}(\alpha)\cap \mathrm{H}(\alpha)$. 

It is clear that the facets of $\mathrm{P}(\alpha)$ are given by intersections of the facets of $\mathrm{C}^{m-1}(\alpha)$ with the hyperplane $\mathrm{H}(\alpha)$. 
Note that the facets of $\mathrm{C}^{m-1}(\alpha)$ are described by the equations $x_{j}=\pm \alpha_{j}$. We call the facets described by equations $x_{j}=\alpha_{j}$ and $x_{j}= -\alpha_{j}$ as \emph{positive facets} and \emph{negative facets} respectively.

Corollary \ref{m-2torus} says that, if the short code of a length vector is $<\{1,2,\dots,m-2,m\}>$ then the corresponding chain space is diffeomorphic to $(m-2)$-dimensional torus. 
So it is clear that the corresponding moment polytope  must be a $(m-2)$-cube. 
Here we explicitly describe it in terms of intersections of half-spaces. 

\begin{lemma}
Let $<\{1,2,\dots,m-2,m\}>$ be the short code of a length vector $\alpha$. 
Then the moment polytope $\mathrm{P}(\alpha)\cong I^{m-2}$, the $(m-2)$-dimensional cube.
\end{lemma}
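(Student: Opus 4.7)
The plan is to use the hyperplane equation defining $\mathrm{H}(\alpha)$ to eliminate one coordinate and then read off from the short code hypothesis that the bounds on the eliminated coordinate are redundant. Concretely, I would solve for $w_{m-1}$ in terms of the other coordinates, obtaining $w_{m-1} = \alpha_m - \sum_{i=1}^{m-2} w_i$. Under the projection $\pi\colon(w_1,\dots,w_{m-1}) \mapsto (w_1,\dots,w_{m-2})$, the polytope $\mathrm{P}(\alpha) = \mathrm{C}^{m-1}(\alpha) \cap \mathrm{H}(\alpha)$ is affinely isomorphic to the image
$$R = \Bigl\{(w_1,\dots,w_{m-2}) \in \prod_{i=1}^{m-2}[-\alpha_i,\alpha_i] : \alpha_m - \alpha_{m-1} \leq \sum_{i=1}^{m-2} w_i \leq \alpha_m + \alpha_{m-1}\Bigr\},$$
the two linear inequalities being exactly the condition $w_{m-1} \in [-\alpha_{m-1},\alpha_{m-1}]$.

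Next I would translate the hypothesis that $<\{1,2,\dots,m-2,m\}>$ is the short code of $\alpha$. In particular, the subset $\{1,\dots,m-2,m\}$ is $\alpha$-short, which by Definition~\ref{adp} is the single inequality
$$\sum_{i=1}^{m-2} \alpha_i + \alpha_m < \alpha_{m-1}.$$
Rearranged, this says $\alpha_m - \alpha_{m-1} < -\sum_{i=1}^{m-2}\alpha_i$, and combined with $\alpha_m > 0$ it also gives $\alpha_m + \alpha_{m-1} > \sum_{i=1}^{m-2}\alpha_i$.

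I would then observe that for any point of $\prod_{i=1}^{m-2}[-\alpha_i,\alpha_i]$ the sum $\sum_{i=1}^{m-2} w_i$ lies in the interval $\bigl[-\sum_{i=1}^{m-2}\alpha_i,\, \sum_{i=1}^{m-2}\alpha_i\bigr]$, which by the previous step sits strictly inside $(\alpha_m-\alpha_{m-1},\,\alpha_m+\alpha_{m-1})$. Hence both inequalities cutting out $R$ are redundant, so $R$ equals the full product $\prod_{i=1}^{m-2}[-\alpha_i,\alpha_i]$, which is combinatorially (in fact affinely) the $(m-2)$-cube $I^{m-2}$. Since $\pi|_{\mathrm{P}(\alpha)}$ is an affine isomorphism onto $R$, the lemma follows.

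No step is really hard; the only thing worth pausing on is the translation between the short code inequality and the redundancy of both half-space inequalities for $w_{m-1}$. That dictionary, namely that a short subset containing $m$ is exactly what kills a facet of the ambient cube in the slice by $\mathrm{H}(\alpha)$, is the combinatorial idea that this proof isolates and that will, I expect, be exploited systematically in the subsequent section when the full face poset is described.
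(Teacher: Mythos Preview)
Your argument is correct and is essentially the same as the paper's, just phrased via projection rather than geometrically: the paper observes that the hyperplane $\mathrm{H}(\alpha)$ misses both facets $x_{m-1}=\pm\alpha_{m-1}$ of the ambient cube (using the same inequality $\alpha_{m-1}>\sum_{i=1}^{m-2}\alpha_i+\alpha_m$, i.e.\ that $\{m-1\}$ is long), which is exactly your statement that the two constraints on $\sum_{i=1}^{m-2}w_i$ are redundant. In fact, your projection argument is precisely what the paper formalises a few results later when it introduces the polytope $\mathrm{Q}(\alpha)$.
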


\begin{proof}
Let $F_{m-1}$ and $\bar{F}_{m-1}$ be the two opposite facets of $\mathrm{C}^{m-1}(\alpha)$ represented by the equations $x_{m-1}=\alpha_{m-1}$ and $x_{m-1}=-\alpha_{m-1}$ respectively. 
Note that the set 
$$\{(\pm \alpha_{1},\pm \alpha_{2},\dots,\pm \alpha_{m-2},\alpha_{m-1})\}$$ 
forms the vertices of $F_{m-1}$ and the set
$$\{(\pm \alpha_{1},\pm \alpha_{2},\dots,\pm \alpha_{m-2},-\alpha_{m-1})\}$$ 
forms the vertices of $\bar{F}_{m-1}$. 
Since $\{m-1\}$ is a long subset we have the following inequality, 
\begin{equation}\label{eosl}
  -\alpha_{m-1}+\sum_{i=1}^{m-2}\pm \alpha_{i}< \alpha_{m}< \alpha_{m-1}+\sum_{i=1}^{m-2}\pm \alpha_{i}.
\end{equation}

The rightmost inequality above implies that, the hyperplane $\mathrm{H}(\alpha)$ does not intersect the facet $\bar{F}_{m-1}$. 
Similarly the leftmost inequality says that $\mathrm{H}(\alpha)$ does not intersect the facet $F_{m-1}$. 
Therefore, the hyperplane $\mathrm{H}(\alpha)$ passes through $\mathrm{C}^{m-1}(\alpha)$ dividing it into two isomorphic polytopes which are combinatorially isomorphic to $(m-1)$-cubes. 
We conclude that $C^{m-1}(\alpha)\cap \mathrm{H}(\alpha) \cong I^{m-2}$, being a facet of both the divided parts.
\end{proof}

Since our aim is to classify the aspherical chain spaces, henceforth we discard length vectors whose short code is $<\{1,2,\dots,m-2,m\}>$, i.e., we discard the length vector where $\alpha_{m-1}>\sum_{i=1}^{m-2}\alpha_{i} + \alpha_{m}$. 
In particular we assume that  $\alpha_{i}<\sum_{j\neq i}\alpha_{j}$.

\begin{proposition} \label{m-1facet}
Let $\alpha=(\alpha_{1},\dots,\alpha_{m})$ be a length vector with $\alpha_{i}<\sum_{j\neq i}\alpha_{j}$. Then the hyperplane $\mathrm{H}(\alpha)$ intersects all the positive facet of $\mathrm{C}^{m-1}(\alpha)$.
\end{proposition}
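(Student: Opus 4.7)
The plan is to show, for each $j \in \{1,\dots,m-1\}$, that the intersection of $\mathrm{H}(\alpha)$ with the positive facet
\[
F_{j}^{+} = \{w \in \mathrm{C}^{m-1}(\alpha) : w_{j} = \alpha_{j}\}
\]
is nonempty. Fixing the $j$-th coordinate to $\alpha_{j}$ reduces the defining equation of $\mathrm{H}(\alpha)$ to
\[
\sum_{\substack{i=1 \\ i \neq j}}^{m-1} w_{i} \;=\; \alpha_{m} - \alpha_{j},
\]
with the remaining coordinates constrained to $w_{i} \in [-\alpha_{i}, \alpha_{i}]$. By continuity (or by observing that $\sum_{i\neq j} w_i$ ranges over a single interval as the $w_i$ vary in their boxes), such a solution exists exactly when
\[
\Bigl|\alpha_{m} - \alpha_{j}\Bigr| \;\leq\; \sum_{\substack{i=1 \\ i \neq j}}^{m-1} \alpha_{i}.
\]

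First I would dispose of the upper bound: $\alpha_{m} - \alpha_{j} \leq \sum_{i \neq j}^{m-1} \alpha_{i}$ rearranges to $\alpha_{m} \leq \sum_{i=1}^{m-1}\alpha_{i}$, which holds (strictly, by genericity) because $\mathrm{Ch}(\alpha)$ is nonempty. Next I would handle the lower bound: $-(\alpha_{m} - \alpha_{j}) \leq \sum_{i\neq j}^{m-1}\alpha_{i}$ rearranges to $\alpha_{j} \leq \sum_{i\neq j}^{m} \alpha_{i}$, which is precisely guaranteed (again strictly) by the standing hypothesis $\alpha_{j} < \sum_{i\neq j}\alpha_{i}$. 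Combining these two estimates, $\alpha_{m} - \alpha_{j}$ lies strictly inside the feasible interval, so a solution exists, and hence $F_{j}^{+} \cap \mathrm{H}(\alpha) \neq \emptyset$.

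Since $j$ was arbitrary, this establishes the claim for every positive facet. There is no real obstacle: the argument is a short feasibility check, and the only subtlety worth flagging is that both hypotheses are needed and are used tightly—the chain-space condition $\alpha_{m} < \sum_{i=1}^{m-1}\alpha_{i}$ controls the upper bound, while the assumption $\alpha_{i} < \sum_{j \neq i}\alpha_{j}$ (which rules out the exceptional short code $\langle\{1,2,\dots,m-2,m\}\rangle$ just discarded) controls the lower bound. In fact the strict inequalities show that $\mathrm{H}(\alpha)$ meets the relative interior of each $F_{j}^{+}$, so each positive facet contributes a genuine facet of $\mathrm{P}(\alpha)$—a fact that will be useful for the subsequent enumeration of facets in terms of the short code.
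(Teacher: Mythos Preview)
Your proof is correct and follows essentially the same idea as the paper's: both reduce the question to whether the target coordinate sum $\alpha_{m}$ lies in the range of coordinate sums attainable on the facet $\{w_{j}=\alpha_{j}\}$. The paper phrases this by contradiction and only writes out the inequality $\alpha_{m}+\sum_{i\neq j}\alpha_{i}<\alpha_{j}$ (the lower-bound failure), whereas you argue directly and verify both endpoints of the feasibility interval explicitly; your version is therefore slightly more complete, and your closing remark that the strict inequalities force $\mathrm{H}(\alpha)$ to meet the relative interior of each $F_{j}^{+}$ is a useful addition for the facet count that follows.
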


\begin{proof}

We argue by contradiction. 
Suppose that the hyperplane $\mathrm{H}(\alpha)$ does not intersect all the positive facets.
In particular, let it miss the facet given by the equation $x_{j_{0}}=\alpha_{j_{0}}$. 
The coordinate sum of the elements of $\mathrm{H}(\alpha)$ (which is $\alpha_{m}$) is then strictly less than 
the coordinate sum of points on the missed facet, i.e.,  $$\alpha_{m} < \alpha_{j_{0}}-\sum_{i\neq j_{o}}\alpha_{i}$$ 
equivalently
$$\alpha_{m}+\sum_{i\neq j_{o}}\alpha_{i} < \alpha_{j_{0}}.$$ 
This is impossible since $\alpha$  is generic.  
\end{proof}

The \Cref{m-1facet} describes $m-1$ many facets of the moment polytope. 
We now characterize length vectors $\alpha$ when $\mathrm{P}(\alpha)$ is a simplex.

\begin{lemma}\label{jml}
Let $\alpha=(\alpha_{1},\alpha_{2},\dots,\alpha_{m})$ be a generic length vector.  The hyperplane	$\mathrm{H}(\alpha)$ intersects exactly $m-1$ facets of $\mathrm{C}^{m-1}(\alpha)$ given by an equations $x_{j}=\alpha_{j}$ for $1\leq j \leq m-1$
if and only if $\{m,j\}$ is long subset for all $1\leq j\leq m-1$. In particular $\mathrm{P}(\alpha)\cong \bigtriangleup^{m-2}$, an $(m-2)$-simplex.
\end{lemma}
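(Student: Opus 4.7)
\medskip

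\noindent\textbf{Proof proposal.} The strategy is to reduce the statement to a direct inequality check on the negative facets of $\mathrm{C}^{m-1}(\alpha)$, by leveraging \Cref{m-1facet} which already handles the positive facets.

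First I observe that, by \Cref{m-1facet}, the hyperplane $\mathrm{H}(\alpha)$ meets every one of the $m-1$ positive facets $\{x_j=\alpha_j\}$. Hence the condition ``$\mathrm{H}(\alpha)$ intersects exactly the $m-1$ positive facets'' is equivalent to ``$\mathrm{H}(\alpha)$ misses every negative facet $F_j^- = \{x_j=-\alpha_j\}$.''

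Next I would compute when $\mathrm{H}(\alpha)$ misses $F_j^-$. On $F_j^-$ the coordinate $x_j$ is fixed at $-\alpha_j$ and the other coordinates vary in $[-\alpha_i,\alpha_i]$, so the coordinate sum $\sum_{i=1}^{m-1} x_i$ attains all values in the interval
\[
\Bigl[-\sum_{i=1}^{m-1}\alpha_i,\ \sum_{i\neq j,\, i\le m-1}\alpha_i - \alpha_j\Bigr].
\]
Thus $\mathrm{H}(\alpha)=\{\sum x_i = \alpha_m\}$ fails to meet $F_j^-$ if and only if $\alpha_m$ exceeds the right endpoint, i.e.\ $\alpha_m + \alpha_j > \sum_{i\neq j,\,m}\alpha_i$; the lower endpoint is automatically smaller than $\alpha_m$ since $\alpha_m>0$. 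By \Cref{adp} this inequality is precisely the statement that $\{j,m\}$ is a long subset (strict inequality is free because $\alpha$ is generic). Running this equivalence across all $j\in[m-1]$ gives the biconditional asserted in the lemma.

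For the ``in particular'' clause I would argue as follows. The polytope $\mathrm{P}(\alpha)$ lies inside the $(m-2)$-dimensional affine hyperplane $\mathrm{H}(\alpha)$, is nonempty and full dimensional there (both consequences of genericity and $\alpha_m<\sum_{i<m}\alpha_i$), and under the hypothesis has exactly $m-1$ facets. Moreover $\mathrm{P}(\alpha)$ is simple, since $\mathrm{C}^{m-1}(\alpha)$ is simple and the generic hyperplane section of a simple polytope remains simple. The minimum number of facets of a simple $(m-2)$-dimensional polytope is $m-1$, attained uniquely by the simplex $\triangle^{m-2}$, so $\mathrm{P}(\alpha)\cong\triangle^{m-2}$.

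The only mildly subtle step is the direction of the inequalities on the negative facets and the bookkeeping between ``short'' and ``long'' in \Cref{adp}; beyond that the argument is a straightforward intersection-of-halfspaces computation, so I do not foresee a serious obstacle.
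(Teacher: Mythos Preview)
Your argument is correct and lands on the very same inequality $\alpha_m+\alpha_j>\sum_{i\neq j,m}\alpha_i$ as the paper, but the packaging differs. The paper reasons through the all--positive vertex $v=(\alpha_1,\dots,\alpha_{m-1})$ and its neighbours $v(j)$: the hyperplane meets only the positive facets precisely when it separates $v$ from every $v(j)$, i.e.\ when $\alpha_m$ exceeds each coordinate sum $\beta_j=\sum_{i\neq j}\alpha_i-\alpha_j$. You instead invoke \Cref{m-1facet} to dispose of the positive facets and then analyse each negative facet $F_j^-$ directly via the range of the linear functional $\sum x_i$ on it; this is effectively the contrapositive of the paper's \Cref{jms}, proved here in advance. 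Your route is a little more explicit about why the lower endpoint of the range on $F_j^-$ is never the obstruction, and your justification of the simplex conclusion (minimal facet count for an $(m-2)$--polytope) is more detailed than the paper's, which leaves that step implicit in the vertex--truncation picture.
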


\begin{proof}
Consider a vertex $v=(\alpha_{1},\dots,\alpha_{m-1})$ of $\mathrm{C}^{m-1}(\alpha)$ and $$\{v(j)=(\alpha_{1},\dots,-\alpha_{j},\dots,\alpha_{m-1}) : 1\leq j \leq m-1\}$$ be its neighboring vertices. We can observe that the hyperplane $\mathrm{H}(\alpha)$ cannot intersects $v$ since $\alpha$ is generic.
Note that $\mathrm{H}(\alpha)$ intersects exactly $m-1$ positive facets of $\mathrm{C}^{m-1}(\alpha)$ if and only if the coordinate sum (which is $\alpha_{m}$) of an element of $\mathrm{H}(\alpha)$ is 
greater than or equal the coordinate sum of adjacent vertices of $v(j)$.  
Let $\beta_{j}=\sum_{i\neq j}\alpha_{i}-\alpha_{j}$ be the coordinate sum of $v(j)$ for $1\leq j\leq m-1$. The hyperplane $\mathrm{H}(\alpha)$ intersects exactly $m-1$ positive facets of $\mathrm{C}^{m-1}(\alpha)$ if and only if
$$\alpha_{m}\geq \beta_{j}=\sum_{i\neq j}\alpha_{i}-\alpha_{j},$$ for all $1\leq j\leq m-1$. 
Which gives $$\alpha_{m}+\alpha_{j}\geq \sum_{i\neq j}\alpha_{i}.$$  This proves the lemma.
\end{proof}

The next result determines the remaining facets of the moment polytope. 
It shows that these facets can be determined using the short subset information.

\begin{lemma}\label{jms}
Let $\alpha=(\alpha_{1},\alpha_{2},\dots,\alpha_{m})$ be a generic length vector.
The hyperplane $\mathrm{H}(\alpha)$ intersects a facet of $\mathrm{C}^{m-1}(\alpha)$ given by an equation $x_{j}=-\alpha_{j}$ for some $1\leq j \leq m-1$ if and only if $\{m,j\}$ is an $\alpha$-short subset.
\end{lemma}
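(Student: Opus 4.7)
The plan is to pin down exactly when $\mathrm{H}(\alpha)$ meets the facet $F_j^{-} := \{x_j = -\alpha_j\}$ of the parallelepiped $\mathrm{C}^{m-1}(\alpha)$ by examining the range of the coordinate-sum functional $w \mapsto \sum_{i=1}^{m-1} w_i$ restricted to $F_j^{-}$. A point of $F_j^{-}$ has the form $(w_1, \ldots, -\alpha_j, \ldots, w_{m-1})$ with $w_i \in [-\alpha_i, \alpha_i]$ for $i \neq j$, so this functional attains every value in the interval
\[
\left[\,-\alpha_j - \sum_{\substack{i \neq j \\ i \leq m-1}}\alpha_i,\; -\alpha_j + \sum_{\substack{i \neq j \\ i \leq m-1}}\alpha_i\,\right].
\]
Since $\mathrm{H}(\alpha)$ is the level set where this functional equals $\alpha_m$, one has $\mathrm{H}(\alpha) \cap F_j^{-} \neq \emptyset$ if and only if $\alpha_m$ lies in the displayed interval.

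Next I would observe that the lower endpoint is strictly negative while $\alpha_m > 0$, so only the upper bound is nontrivial: the intersection is nonempty if and only if $\alpha_m + \alpha_j \leq \sum_{i \in [m-1] \setminus \{j\}} \alpha_i$. Genericity of $\alpha$ forbids equality, because the difference of the two sides is a $\pm$-signed sum of all $\alpha_i$ and hence nonzero. Therefore the condition becomes strict,
\[
\alpha_m + \alpha_j < \sum_{i \in [m] \setminus \{j, m\}} \alpha_i,
\]
which is precisely the definition of $\{j, m\}$ being $\alpha$-short.

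To finish, I would remark that when this strict inequality holds the intersection is genuinely $(m-3)$-dimensional, hence contributes an actual facet of $\mathrm{P}(\alpha)$: one may choose $(w_i)_{i \neq j}$ in the relative interior of $\prod_{i \neq j}[-\alpha_i, \alpha_i]$ subject to $\sum_{i \neq j} w_i = \alpha_m + \alpha_j$, yielding a full-dimensional affine slice of an $(m-2)$-cube. The argument is essentially a one-line computation of the range of a linear functional on a box, with genericity upgrading non-strict bounds to strict ones; I do not anticipate a real obstacle, since this mirrors the reasoning already used in \Cref{jml}.
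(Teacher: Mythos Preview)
Your proposal is correct and follows essentially the same approach as the paper: both compute the range of the coordinate-sum functional on the negative facet $\{x_j=-\alpha_j\}$ and observe that $\mathrm{H}(\alpha)$ meets it exactly when $\alpha_m$ falls in that interval, which reduces to $\alpha_m+\alpha_j\le\sum_{i\neq j}\alpha_i$. Your version is in fact slightly more careful, making explicit the use of genericity to upgrade the inequality to a strict one and noting that the resulting slice is full-dimensional.
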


\begin{proof}
Let $v(j)=(\alpha_{1},\dots,-\alpha_{j},\dots,\alpha_{m-1})$ and $w=(-\alpha_{1},\dots, +\alpha_j,\dots,-\alpha_{m-1})$ be the two (extreme) vertices of the facet supported by $x_{j}=-\alpha_{j}$. 
The coordinate sum of $v(j)$ is 
$$\sum_{i\neq j}\alpha_{i}-\alpha_{j}.$$
The hyperplane $\mathrm{H}(\alpha)$ intersects the facet supported by $x_{j}=-\alpha_{j}$ if and only if the sum of the coordinates of an element of $\mathrm{H}(\alpha)$ is
between the  coordinate sums of points $v(j)$ and $w$, i.e., 
$-\sum_{i}^{m-1}\alpha_{i} \leq \alpha_{m} \leq \sum_{i\neq j}\alpha_{i}-\alpha_{j}$, equivalently   
$$\alpha_{m}+\alpha_{j} \leq \sum_{i\neq j}\alpha_{i}.$$ This proves the lemma.
\end{proof}

Both the \Cref{jml} and \Cref{jms} give the exact count of the facets in terms of the short subset information. 

\begin{lemma}\label{FACET}
Let $\alpha=(\alpha_{1},\alpha_{2},\dots,\alpha_{m})$ be a generic length vector and $k$ be the number of two element $\alpha$-short subsets containing $m$. Then $\mathrm{P}(\alpha)$ has $m-1+k$ many facets.
\end{lemma}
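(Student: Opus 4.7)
The plan is essentially to assemble the three previous results (\Cref{m-1facet}, \Cref{jml}, \Cref{jms}) into a clean facet count. Since $\mathrm{P}(\alpha) = \mathrm{C}^{m-1}(\alpha) \cap \mathrm{H}(\alpha)$ and the hyperplane $\mathrm{H}(\alpha)$ meets the interior of $\mathrm{C}^{m-1}(\alpha)$ in general position (genericity of $\alpha$ rules out $\mathrm{H}(\alpha)$ passing through any vertex of the box), every facet of $\mathrm{P}(\alpha)$ is obtained as the intersection of $\mathrm{H}(\alpha)$ with exactly one facet of $\mathrm{C}^{m-1}(\alpha)$, and distinct facets of $\mathrm{C}^{m-1}(\alpha)$ that are cut by $\mathrm{H}(\alpha)$ produce distinct facets of $\mathrm{P}(\alpha)$ (they lie on distinct supporting hyperplanes $x_j = \pm \alpha_j$). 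So the problem reduces to counting which of the $2(m-1)$ facets of the parallelepiped $\mathrm{C}^{m-1}(\alpha)$ are actually cut by $\mathrm{H}(\alpha)$.

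First I would recall from \Cref{m-1facet} that all $m-1$ positive facets $\{x_j = \alpha_j\}$ intersect $\mathrm{H}(\alpha)$, contributing $m-1$ facets to $\mathrm{P}(\alpha)$. Next, for each negative facet $\{x_j = -\alpha_j\}$, \Cref{jms} gives the necessary and sufficient condition for $\mathrm{H}(\alpha)$ to cut it, namely that $\{m,j\}$ be $\alpha$-short. The number of such $j \in [m-1]$ is, by definition, exactly $k$. Adding these two contributions yields $m-1+k$ facets.

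The only subtlety worth verifying carefully is that no facet of $\mathrm{P}(\alpha)$ is ``spurious,'' i.e., that the intersection $\mathrm{H}(\alpha) \cap F$ for a facet $F$ of $\mathrm{C}^{m-1}(\alpha)$ is full-dimensional (a genuine $(m-3)$-face of $\mathrm{P}(\alpha)$) whenever it is nonempty; this is immediate from genericity of $\alpha$, which prevents $\mathrm{H}(\alpha)$ from being tangent to $F$ along a lower-dimensional face. I do not expect any real obstacle here — the lemma is essentially a bookkeeping corollary of the preceding two results.
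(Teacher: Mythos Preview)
Your proposal is correct and follows essentially the same route as the paper: the paper's proof is the single line ``Follows from \Cref{jml} and \Cref{jms},'' and your argument is just a more careful unpacking of exactly that (together with \Cref{m-1facet}, which the paper arguably should also have cited). Your extra remarks about genericity preventing lower-dimensional intersections are a reasonable sanity check but do not constitute a different approach.
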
{}
\begin{proof}
Follows from the \Cref{jml} and \Cref{jms}. 
\end{proof}{}

We now describe the face poset of the moment polytope $\mathrm{P}(\alpha)$ in terms of certain subsets of $[m]$. 
We further show that this face poset is completely determined by the short code. 

Define $[m-1]^{+}\coloneqq \{1,2,\dots,m-1\}$ and $[m-1]^{-}\coloneqq \{\Bar{1},\Bar{2},\dots,\overline{m-1}\}$ where $\Bar{i}=-i$. 
Denote by $[m-1]^{\pm}$ the union $[m-1]^{+}\cup [m-1]^{-}$. 
Let $J_{1}\subseteq [m-1]^{+}$ and $J_{2}\subseteq [m-1]^{-}$. 
We write $\alpha_{J_{1}}$ for $\sum_{j_{i}\in J_{1}}\alpha_{j_{i}}$ and $\alpha_{J_{2}}$ for $\sum_{j_{s}\in -J_{2}}\alpha_{j_{s}}$.

\begin{definition}\label{admsub}
Let $J\subseteq [m-1]^{\pm}$, such that $J=J_{1}\cup J_{2}$ where $J_{1}\subseteq [m-1]^{+}$, $J_{2}\subseteq [m-1]^{-}$ and $J_{1}\cap -J_{2}=\emptyset$. 
Then $J$ is called an \emph{admissible subset} of $[m-1]^{\pm}$ if the shorter length vector 
$$\alpha(J) := (\alpha_{j_{1}},\alpha_{j_{2}},\dots,\alpha_{j_{k}},|\alpha_{m}+ \alpha_{J_{1}}-\alpha_{J_{2}}|)$$ 
is generic, where $(J_{1}\cup -J_{2})^{c}=\{j_{1},\dots,j_{k}\}$.
\end{definition}

We denote the set of all admissible subsets of $[m-1]^{\pm}$ by $\mathrm{Ad}(\alpha)$.
We have a natural partial order on $\mathrm{Ad}(\alpha)$,  $J\leq S$  if and only if $S\subseteq J$. 
With this partial order $\mathrm{Ad}(\alpha)$ becomes a poset.

\begin{theorem}\label{POSET}
The poset $\mathrm{Ad}(\alpha)$ is isomorphic to the face poset of  $\mathrm{P}(\alpha)$.
\end{theorem}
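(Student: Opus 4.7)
The plan is to build an order-reversing bijection between $\mathrm{Ad}(\alpha)$ and the face lattice of $\mathrm{P}(\alpha)$, exploiting that $\mathrm{P}(\alpha)=\mathrm{C}^{m-1}(\alpha)\cap\mathrm{H}(\alpha)$ is a codimension-one slice of a cube. Every face of the cube is obtained by fixing some coordinates at the extreme values $\pm\alpha_j$; encoding such a choice as a subset $J=J_1\cup J_2\subseteq [m-1]^{\pm}$ with $J_1\cap -J_2=\emptyset$ is exactly the same data as in \Cref{admsub}. Since every face of $\mathrm{P}(\alpha)$ equals $F_J\cap\mathrm{H}(\alpha)$ for a unique minimal cube-face $F_J$, the task reduces to identifying which $J$ arise and matching the orders.

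The key reduction is the following: after substituting the extremal values into $\sum x_i=\alpha_m$, the slice $F_J\cap\mathrm{H}(\alpha)$ becomes
\[
\Big\{(x_{j_1},\dots,x_{j_k})\in\prod_{i=1}^{k}[-\alpha_{j_i},\alpha_{j_i}] : \sum_{i=1}^{k} x_{j_i}=\alpha_m+\alpha_{J_1}-\alpha_{J_2}\Big\},
\]
where $(J_1\cup -J_2)^c=\{j_1,\dots,j_k\}$. After absorbing a possible sign by the reflection $x_{j_i}\mapsto -x_{j_i}$ (a symmetry of the ambient cube that preserves combinatorics), this is precisely the moment polytope $\mathrm{P}(\alpha(J))$ of the chain space for the shorter length vector $\alpha(J)$. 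Consequently $F_J\cap\mathrm{H}(\alpha)$ is nonempty and has the expected dimension $k-1=m-2-|J|$ if and only if $\alpha(J)$ is a generic chain-space length vector, that is, iff $J\in\mathrm{Ad}(\alpha)$.

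With this in place, define $\phi:\mathrm{Ad}(\alpha)\to\{\text{faces of }\mathrm{P}(\alpha)\}$ by $\phi(J)=F_J\cap\mathrm{H}(\alpha)$. Well-definedness and correct dimension are immediate from the reduction. Injectivity: the affine span of $\phi(J)$ records exactly which coordinates are extremal on it, so the minimal containing cube-face, and hence $J$, is reconstructible. Surjectivity: for any face $G$ of $\mathrm{P}(\alpha)$, the unique minimal enclosing cube-face $F_J$ produces admissible $J$, since genericity of $\alpha$ propagates to $\alpha(J)$. Order-preservation is then automatic: in the cube $F_J\subseteq F_S$ iff $S\subseteq J$, and this is inherited by slices, matching the convention $J\le S\iff S\subseteq J$ in $\mathrm{Ad}(\alpha)$.

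The main obstacle I anticipate is the clean verification of the genericity equivalence in the reduction step --- pinning down that non-genericity of $\alpha(J)$ is precisely the dichotomy between $F_J\cap\mathrm{H}(\alpha)$ being a genuine face of $\mathrm{P}(\alpha)$ and a degenerate intersection. Non-genericity corresponds to some signed subsum $\sum\epsilon_i\alpha_{j_i}$ (with $\epsilon_i\in\{\pm 1\}$) equaling $\alpha_m+\alpha_{J_1}-\alpha_{J_2}$, which either empties the slice or forces it into a strictly smaller cube-face, contradicting minimality of $F_J$. I would handle this by unpacking genericity of $\alpha(J)$ explicitly and applying the facet-level characterizations already proved in \Cref{jml}, \Cref{jms} and \Cref{FACET} to the reduced length vector $\alpha(J)$. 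Once this equivalence is clear, the remaining checks are essentially bookkeeping.
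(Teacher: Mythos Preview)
Your approach is essentially the same as the paper's: both define the map $\phi(J)=F_J$, where $F_J$ is the intersection of the appropriate cube facets with $\mathrm{H}(\alpha)$, and argue it is a poset isomorphism. Your recursive identification of each $F_J$ with the smaller moment polytope $\mathrm{P}(\alpha(J))$ is more explicit than the paper's terse ``it is easy to see'', and it supplies exactly the verification the paper omits---that admissibility of $J$ is equivalent to $F_J$ being a nonempty face of the expected dimension $m-2-|J|$.
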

\begin{proof}
We start by associating a face of the moment polytope to an admissible subset of $[m-1]^{\pm}$. 
Let $J$ be an admissible subset of $[m-1]^{\pm}$. Define $$F_{J}\coloneqq \{(x_{1},x_{2},\dots,x_{m-1})\in \mathrm{P}(\alpha): x_{j_{i}}=\alpha_{j_{i}},j_{i}\in J_{1} , x_{j_{s}}=-\alpha_{j_{s}},j_{s}\in -J_{2} \}$$ Let $F_{i}$ and $\Bar{F}_{i}$ be the facets of $\mathrm{C}^{m-1}(\alpha)$ given by an equations $x_{i}=\alpha_{i}$ and $x_{i}=-\alpha_{i}$. Note that $$F_{J}=\displaystyle\bigcap_{j_{i}\in J_{1}}F_{j_{i}}\bigcap \displaystyle\bigcap_{j_{s}\in -J_{2}}\Bar{F}_{j_{s}}\bigcap \mathrm{H}(\alpha).$$ 
In fact it is easy to see that any face of $\mathrm{P}(\alpha)$ looks like $F_{J}$ for some admissible subset of $[m-1]^{\pm}$. 
The map $$\phi: \mathrm{Ad}(\alpha)\longrightarrow \mathrm{P}(\alpha)$$ defined by $$\phi(J)=F_{J},$$  is a poset isomorphism.
\end{proof}

\begin{remark}\label{addsc}
We observe straightforwardly that
for an admissible subset $J$,  $\mathrm{dim}(F_{J})=m-2-|J|$.
Moreover, let $\alpha$ and $\beta$ be two generic length vectors with an isomorphism between $\mathrm{Ad}(\alpha)$ and $\mathrm{Ad}(\beta)$. 
Any such isomorphism restricts to an isomorphism between $S_{m}(\alpha)$ and $S_{m}(\beta)$. 
Therefore, the short code of $\alpha$ determines the poset $\mathrm{Ad}(\alpha)$.
\end{remark}

\begin{proposition}
Let $<\{k-1,m\}>$ and $<\{k,m\}>$ be short codes of generic length vectors $\alpha$ and $\beta$ respectively. Then the moment polytope $\mathrm{P}(\beta)$ is obtained by truncating a vertex of $\mathrm{P}(\alpha)$. 
\end{proposition}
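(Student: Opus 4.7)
The plan is to realize $\mathrm{P}(\alpha)$ and $\mathrm{P}(\beta)$ as two parallel hyperplane slices of a common parallelepiped and to analyse the combinatorial change as the slicing hyperplane sweeps through a single vertex of the box. By \Cref{addsc} the face poset of $\mathrm{P}(\alpha)$ depends only on the short code of $\alpha$, so we are free to pick convenient representatives that share the first $m-1$ components.

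Fix $\gamma_1\le\cdots\le\gamma_{m-1}$ and set $\alpha=(\gamma_1,\ldots,\gamma_{m-1},\alpha_m)$, $\beta=(\gamma_1,\ldots,\gamma_{m-1},\beta_m)$. A direct inspection shows that $\{k,m\}$ is $\alpha$-long iff $\alpha_m>c^*$ and $\{k,m\}$ is $\beta$-short iff $\beta_m<c^*$, where
\[
c^* \;:=\; \textstyle\sum_{i=1}^{m-1}\gamma_i - 2\gamma_k
\]
is the coordinate sum of the box vertex $v^*=(\gamma_1,\ldots,\gamma_{k-1},-\gamma_k,\gamma_{k+1},\ldots,\gamma_{m-1})\in C:=\prod_{i=1}^{m-1}[-\gamma_i,\gamma_i]$. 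Since for generic $(\gamma_1,\ldots,\gamma_{m-1})$ the sign-sums $\sum_i\epsilon_i\gamma_i$ over $\epsilon\in\{\pm1\}^{m-1}$ are pairwise distinct, we may further choose $\alpha_m,\beta_m$ close to $c^*$ so that $c^*$ is the \emph{only} sign-sum lying in $(\beta_m,\alpha_m)$. Consequently the continuous family $\mathrm{P}(t):=C\cap\{\sum x_i=t\}$ interpolates $\mathrm{P}(\alpha)$ and $\mathrm{P}(\beta)$ and changes combinatorial type only at $t=c^*$.

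The distinguished vertex is $v_0\in\mathrm{P}(\alpha)$ corresponding under \Cref{POSET} to the admissible subset $J_0:=[m-1]\setminus\{k\}\subset[m-1]^+$; explicitly, $v_0=(\gamma_1,\ldots,\gamma_{k-1},\,\alpha_m-\sum_{i\neq k}\gamma_i,\,\gamma_{k+1},\ldots,\gamma_{m-1})$, whose $k$-th coordinate lies in $(-\gamma_k,\gamma_k)$ precisely because $\{k,m\}$ is $\alpha$-long, while $J_0$ fails to be $\beta$-admissible once $\{k,m\}$ becomes $\beta$-short. A local analysis at $v^*$ (translate so $v^*=0$ and flip signs of the coordinates $i\neq k$, turning the box into the orthant $\{w:w_i\ge0\}$ sliced by $\sum w_i=t-c^*$) shows that as $t$ decreases across $c^*$ the vertex $v_0$ is cut off by the coordinate hyperplane $w_k=0$: for $t$ slightly above $c^*$ the slice has a simple cone-vertex at $v_0$, while for $t$ slightly below $c^*$ this vertex is replaced by the $(m-3)$-dimensional simplex $\{w:w_k=0,\;\sum_{i\neq k}w_i=c^*-t,\;w_i\ge0\}$. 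This is exactly the local picture of vertex truncation.

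To promote this local picture to the claimed global statement, it remains to verify that the new facet of $\mathrm{P}(\beta)$ is globally a simplex. By \Cref{jms} and \Cref{FACET} the only new facet is $F_{\{\bar k\}}=\mathrm{P}(\beta)\cap\{x_k=-\gamma_k\}$, and substituting $x_k=-\gamma_k$ identifies $F_{\{\bar k\}}$ with the moment polytope of the chain space for the reduced length vector $(\gamma_1,\ldots,\widehat{\gamma_k},\ldots,\gamma_{m-1},\,\beta_m+\gamma_k)$. By \Cref{jml} this reduced polytope is an $(m-3)$-simplex provided $\{j,k,m\}$ is $\beta$-long for every $j\in[m-1]\setminus\{k\}$, and this is automatic because the short code $<\{k,m\}>$ admits no maximal short subset of size $\ge 3$, so no size-$\ge 3$ subset of $[m]$ containing $m$ can be $\beta$-short. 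Combining this global simplicity of the new facet with the local vertex-truncation picture at $v^*$ (and our arrangement that $c^*$ is the unique wall crossed in $(\beta_m,\alpha_m)$), we conclude that $\mathrm{P}(\beta)$ is the truncation of $\mathrm{P}(\alpha)$ at $v_0$. The main obstacle is the passage from the local analysis at $v^*$ to a global isomorphism of face posets, which is handled by the single-wall-crossing arrangement together with the short-code control on higher-size short subsets.
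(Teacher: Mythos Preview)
Your argument is correct and follows a genuinely different route from the paper. The paper compares the posets $\mathrm{Ad}(\alpha)$ and $\mathrm{Ad}(\beta)$ directly, asserting the decomposition $\mathrm{Ad}(\beta)=(\mathrm{Ad}(\alpha)\setminus\{Z\})\sqcup\{\text{faces of a new simplex facet}\}$ and reading off the truncation from that formula. You instead realize both polytopes as parallel hyperplane slices $C\cap\{\sum x_i=t\}$ of a common box, arrange that exactly one box vertex $v^*$ has coordinate sum between the two levels, and invoke the standard sweep picture (passing a slicing hyperplane through a single simple vertex of $C$ truncates one vertex of the slice); you then confirm the global shape of the new facet via \Cref{jml}. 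Your geometric approach has the pleasant side effect of identifying the truncated vertex correctly as $J_0=[m-1]^+\setminus\{k\}$, lying on the \emph{positive} facets $x_j=\gamma_j$: the paper's choice $Z=\{\bar1,\dots,\widehat{\bar k},\dots,\overline{m-1}\}$ actually yields $F_Z=\emptyset$ under the stated conventions for $F_J$, a symptom of the sign slip in the formula for $\alpha(J)$ in \Cref{admsub} that your geometric reasoning sidesteps. Conversely, the paper's purely combinatorial comparison (once the sign is corrected) needs no auxiliary choice of representatives. Two small points in your write-up are worth tightening: first, verify explicitly that representatives sharing the first $m-1$ components and having the prescribed short codes exist (e.g.\ take the $\gamma_i$ close enough that $\gamma_1+\gamma_2>\gamma_{m-1}$, so no three-element subset containing $m$ can be short near $t=c^*$); second, the step ``single wall crossed $\Rightarrow$ nothing else changes'' deserves one line---for every $J$ with $-J_2\ne\{k\}$ and $J_1\ne[m-1]\setminus\{k\}$ the endpoints of the $t$-interval on which $F_J(t)\ne\emptyset$ are coordinate sums of box vertices other than $v^*$, hence lie outside $(\beta_m,\alpha_m)$, so $F_J(\alpha_m)$ and $F_J(\beta_m)$ are simultaneously empty or nonempty.
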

\begin{proof}
Let $Z=\{\overline{1},\overline{2},\dots,\overline{k-1},\overline{k+1},\dots,\overline{m-1}\}$ be an admissible subset corresponding to $\alpha$. Note that $Z$ represents a vertex of $\mathrm{P}(\alpha)$. Let $Z_{i}=Z\setminus \{\overline{i}\}\cup \{k\}$ for all $i\in \{1,2,\dots,k-1,k+1,\dots,m-1\}$. Each $Z_{i}$ is an admissible subset corresponding to $\beta$ and represents a vertex of $\mathrm{P}(\beta)$.
Let $\mathrm{Ad}(\alpha)$ and $\mathrm{Ad}(\beta)$ be the posets of an admissible subsets corresponding to the length vector $\alpha$, $\beta$ respectively. Then we have,  
\begin{equation}\label{adbeta}
    \mathrm{Ad}(\beta) = \mathrm{Ad}(\alpha)\setminus Z \bigsqcup \{\{k\}, J : J\subseteq Z_{i}\},
\end{equation}
where $i\in \{1,2,\dots,k-1,k+1,\dots,m-1\}$. Note that $\{\{k\}, J : J\subseteq Z_{i}\}$ represents an $(m-3)$-simplex in $\mathrm{P}(\beta)$. 
The \Cref{adbeta} implies the vertex $Z$ of $\mathrm{P}(\alpha)$ is replaced by an $(m-3)$-simplex represented by the admissible subset $\{k\}$ thus proving the proposition. 
\end{proof}

\begin{example} 
If the short code of a generic length vector is $<\{5\}>$ then the corresponding moment polytope is a $3$-simplex.
On the other hand, if the short code is $<\{1,5\}>$ then the corresponding moment polytope is obtained by truncating a vertex of the $3$-simplex.  
\begin{figure}[H]
\centering
\includegraphics[scale=1]{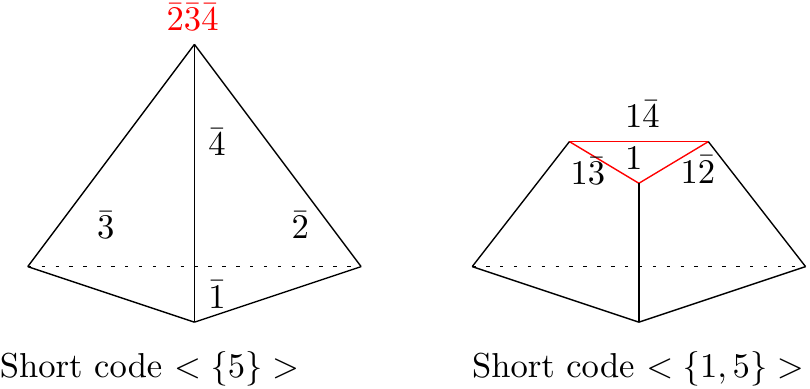}
\caption{Vertex truncation}
\end{figure}
\end{example}

Now we show that the poset of admissible subsets $\mathrm{Ad}(\alpha)$ is determined by the short code of the corresponding length vector.  

\begin{theorem}\label{Add}
Let $\alpha=(\alpha_{1},\dots,\alpha_{m})$ be a generic length vector. 
Let $J=J_{1}\cup J_{2}$ where $J_{1}\subseteq [m-1]^{+}$ and $J_{2}\subseteq [m-1]^{-}$ with $J_{1}\cap J_{2}= \emptyset$. 
Then $J$ is an admissible subset of $[m-1]^{\pm}$ if and only if $J_{1}\cup \{m\}$ and $-J_{2}$ are $\alpha$-short subsets of $[m]$. 
\end{theorem}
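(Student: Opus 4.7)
The approach is to reduce admissibility to a concrete feasibility inequality and rewrite that inequality as the two short-subset conditions in the statement. By \Cref{admsub}, $J$ is admissible exactly when $\alpha(J)$ is a generic length vector. Since every signed sum of components of $\alpha(J)$ expands into a signed sum of components of $\alpha$ (with specific sign choices for the parts coming from $J_1$, $-J_2$, and $\alpha_m$), the signed-sum genericity of $\alpha(J)$ is automatic from genericity of $\alpha$. What remains is the requirement that $\alpha(J)$ is actually a valid length vector whose associated chain space is non-empty, i.e.,
\[
|\alpha_m + \alpha_{J_1} - \alpha_{J_2}| < S, \qquad S := \sum_{l \in (J_1 \cup -J_2)^c} \alpha_l .
\]
Geometrically this is the condition that the face $F_J$ of $\mathrm{P}(\alpha)$ defined in the proof of \Cref{POSET} is non-empty.

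The heart of the proof is then to unpack this single absolute-value inequality into two short-subset inequalities. From the partition $[m-1] = J_1 \sqcup (-J_2) \sqcup (J_1 \cup -J_2)^c$ one has the identity $S + \alpha_{J_1} + \alpha_{J_2} = \sum_{i \in [m-1]} \alpha_i$. The upper bound $\alpha_m + \alpha_{J_1} - \alpha_{J_2} < S$ then rearranges as
\[
\sum_{i \in J_1 \cup \{m\}} \alpha_i \;<\; \sum_{i \in [m] \setminus (J_1 \cup \{m\})} \alpha_i ,
\]
which is precisely the statement that $J_1 \cup \{m\}$ is $\alpha$-short. Similarly, the lower bound $-(\alpha_m + \alpha_{J_1} - \alpha_{J_2}) < S$ rearranges as
\[
\sum_{i \in -J_2} \alpha_i \;<\; \sum_{i \in [m] \setminus -J_2} \alpha_i ,
\]
i.e., $-J_2$ is $\alpha$-short. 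The converse direction follows by running each rearrangement backwards and combining the two strict bounds into the single absolute-value inequality, which restores admissibility.

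The main obstacle is purely bookkeeping. The formal positive/negative labels in $[m-1]^{\pm}$, together with the convention $\alpha_{J_2} = \sum_{j \in -J_2} \alpha_j$ summing over the ``underlying'' positive indices, can obscure the calculation if one is not careful about whether a given expression is ranging over formal labels or over the underlying integers in $[m-1]$. Once the identification $J_2 \leftrightarrow -J_2 \subseteq [m-1]$ is fixed and one uses the identity for $S$ above, every step is a routine linear rearrangement and the proof goes through cleanly.
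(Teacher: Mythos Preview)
Your proof is correct and follows essentially the same route as the paper: both arguments reduce admissibility to the single absolute-value inequality $|\alpha_m+\alpha_{J_1}-\alpha_{J_2}|<S$, split it into its two strict bounds, and rearrange each into the corresponding short-subset condition. You are slightly more careful than the paper in separating the ``signed sums are nonzero'' part of genericity (which you correctly observe is automatic from genericity of $\alpha$) from the feasibility inequality that carries the actual content, but the substance of the argument is the same.
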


\begin{proof}
Suppose $J=J_{1}\cup J_{2}$ is an admissible subset of $[m-1]^{\pm}$. i.e. 
\begin{equation}\label{adsb}
|\alpha_{m}+\sum_{i\in J_{1}}\alpha_{i}-\sum_{j\in -J_{2}}\alpha_{j}|< \sum_{k\in (J_{1}\cup -J_{2})^c}\alpha_{k}.	    
\end{equation}
The above equation gives us two inequalities, the first of which is 
\begin{equation}\label{in1}
    \alpha_{m}+\sum_{i\in J_{1}}\alpha_{i}-\sum_{j\in J_{2}}\alpha_{j}< \sum_{k\in (J_{1}\cup -J_{2})^c}\alpha_{k}
\end{equation}
and the second one is
\begin{equation}\label{in2}
 -\sum_{k\in (J_{1}\cup -J_{2})^c}\alpha_{k}< \alpha_{m}+\sum_{i\in J_{1}}\alpha_{i}-\sum_{j\in J_{2}}\alpha_{j}.
\end{equation}
\Cref{in1} gives 
$$\alpha_{m}+\sum_{i\in J_{1}}\alpha_{i}< \sum_{k\in (J_{1}\cup -J_{2})^c}\alpha_{k}+ \sum_{j\in J_{2}}\alpha_{j},$$ 
i.e., $J_{1}\cup \{m\}$ is a short subset. 
\Cref{in2} gives 
$$\sum_{j\in J_{2}}\alpha_{j}<\alpha_{m}+\sum_{i\in J_{1}}\alpha_{i} + \sum_{k\in (J_{1}\cup -J_{2})^c}\alpha_{k},$$ 
i.e., $-J_{2}$ is a short subset. 

Conversely we can obtain the \Cref{adsb} using the inequalities in \Cref{in1} and \Cref{in2}. 
It proves the converse since the length vector $\alpha(J)$ is generic.
As the short code determines the collection of short subsets, it automatically determines the poset of admissible subsets. 
\end{proof}

Recall the definition of \emph{small covers} from the Introduction. 
Note that these manifolds are a topological generalization of real toric varieties as proved by Davis and Januszkiewicz in \cite{MT}. 
One of their important result specifies how to build a small cover from the quotient polytope (see \cite[Section 1.5]{MT} for details). 
It says that there is a regular cell structure on the manifold consisting of $2^n$ copies of the quotient polytope as the top-dimensional cells. 
We describe their construction briefly.

Consider an $n$-dimensional simple polytope $P$ with the facet set $\mathcal{F} = \{F_1,\dots, F_m\}$. 
A function $\chi:\mathcal{F}\to \mathbb{Z}_2^n$ is called characteristic for $P$ if for each vertex $v= F_{i_1}\cap\cdots\cap F_{i_n}$, the $n\times n$ matrix whose columns are $\chi(F_{i_1}),\dots,\chi(F_{i_n})$ is unimodular. 
Given the pair $(P, \chi)$ the corresponding small cover $X(P, \chi)$ is constructed as follows:
\[X(P, \chi) := \left((\mathbb{Z}_2)^n\times P\right)/ \{(t,p) \sim (u,q) \}\quad \text{if~} p = q \text{~and~} t^{-1}u \in \mathrm{stab}(F_q)\]
where $F_q$ is the unique face of $P$ containing $q$ in its relative interior. 

In the remaining section we explicitly determine the entries of the characteristic matrix for chain spaces. 
The moment polytope $\mathrm{P}(\alpha)$ is $(m-2)$-dimensional but described as a subset of $\mathbb{R}^{m-1}$, so project it onto an affinely isomorphic polytope $\mathrm{Q}(\alpha)$. 
This new polytope is embeded in $\mathbb{R}^{m-2}$ and we determine its outward normals, which determine the characteristic function. 

Given a generic length vector $\alpha=(\alpha_{1},\dots,\alpha_{m-1},\alpha_{m})$, we define the hyperplanes in $\mathbb{R}^{m-2}$ as follows: 
$$\Bar{H}_{m-1}(\alpha)=\{(x_{1},x_{2},...,x_{m-2})\in \mathbb{R}^{m-2}:\sum_{i=1}^{m-2}x_{i}=\alpha_{m}-\alpha_{m-1}\}$$ and $$H_{m-1}(\alpha)=\{(x_{1},x_{2},...,x_{m-2})\in \mathbb{R}^{m-2}:\sum_{i=1}^{m-2}x_{i}=\alpha_{m}+\alpha_{m-1}\}.$$ Let $\Bar{H}_{m-1}^{\geq 0}$ and $H_{m-1}^{\leq 0}$ be a positive and negative part of $\Bar{H}_{m-1}$ and $H_{m-1}$ respectively.

\begin{theorem}
Define an $(m-2)$-dimensional polytope as follows
$$
\mathrm{Q}(\alpha)	=\begin{cases}
	\displaystyle\prod_{i=1}^{m-2}[-\alpha_{i},\alpha_{i}]\cap \Bar{H}_{m-1}^{\geq 0}, & \text{if $\{m-1,m\}$ is long subset,}\\
	\displaystyle\prod_{i=1}^{m-2}[-\alpha_{i},\alpha_{i}]\cap \Bar{H}_{m-1}^{\geq 0}\cap H_{m-1}^{\leq o}, & \text{if $\{m-1,m\}$ is short subset.}
	\end{cases}
$$

Then $\mathrm{P}(\alpha)$ is affinely isomorphic to  $\mathrm{Q}(\alpha)$.
\end{theorem}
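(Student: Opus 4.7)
The plan is to realise the claimed affine isomorphism explicitly via the coordinate projection $\pi \colon \mathbb{R}^{m-1} \to \mathbb{R}^{m-2}$ that forgets the last coordinate, and then to rewrite the defining inequalities of $\mathrm{P}(\alpha)$ in the target coordinates. Because $\mathrm{P}(\alpha) \subseteq \mathrm{H}(\alpha)$, the equation $\sum_{i=1}^{m-1} x_i = \alpha_m$ expresses $x_{m-1} = \alpha_m - \sum_{i=1}^{m-2} x_i$, so $\pi|_{\mathrm{H}(\alpha)}$ is an affine bijection onto $\mathbb{R}^{m-2}$ with inverse sending $(x_1,\dots,x_{m-2})$ to $(x_1,\dots,x_{m-2},\alpha_m - \sum_{i=1}^{m-2} x_i)$. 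It therefore suffices to compute $\pi(\mathrm{P}(\alpha))$ and to identify it with $\mathrm{Q}(\alpha)$.

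To do this I transport each defining inequality of the cube $\mathrm{C}^{m-1}(\alpha)$ through $\pi$. For $1 \le i \le m-2$ the pair $-\alpha_i \le x_i \le \alpha_i$ is unchanged and contributes the factor $[-\alpha_i, \alpha_i]$. For $i = m-1$, substituting $x_{m-1} = \alpha_m - \sum_{j=1}^{m-2} x_j$ turns $-\alpha_{m-1} \le x_{m-1} \le \alpha_{m-1}$ into
\[\alpha_m - \alpha_{m-1} \;\le\; \sum_{j=1}^{m-2} x_j \;\le\; \alpha_m + \alpha_{m-1},\]
which are precisely the half-spaces $\overline{H}_{m-1}^{\ge 0}$ and $H_{m-1}^{\le 0}$.

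Finally, I record which of these two half-spaces is actually needed. The minimum of $\sum_{j=1}^{m-2} x_j$ on $\prod_{i=1}^{m-2}[-\alpha_i, \alpha_i]$ is $-\sum_{i=1}^{m-2} \alpha_i$; by the standing assumption made just before \Cref{m-1facet} that $\alpha_{m-1} < \alpha_m + \sum_{i=1}^{m-2}\alpha_i$, this minimum is strictly below $\alpha_m - \alpha_{m-1}$, so $\overline{H}_{m-1}^{\ge 0}$ always cuts the cube non-trivially and must be kept. The maximum of $\sum_{j=1}^{m-2} x_j$ on the same cube is $\sum_{i=1}^{m-2} \alpha_i$, and this exceeds $\alpha_m + \alpha_{m-1}$ if and only if $\{m-1, m\}$ is $\alpha$-short. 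Hence in the long case the upper inequality is automatically satisfied and may be dropped, yielding the first expression for $\mathrm{Q}(\alpha)$, whereas in the short case it is active and both half-spaces must be retained, yielding the second. The main (and essentially only) subtlety is this case split; the rest of the argument is a direct substitution exercise once the projection $\pi$ is fixed.
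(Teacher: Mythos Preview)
Your proof is correct and follows essentially the same approach as the paper: both use the coordinate projection $\pi$ forgetting $x_{m-1}$, note that it restricts to an affine isomorphism on $\mathrm{H}(\alpha)$, and identify $\pi(\mathrm{P}(\alpha))$ with $\mathrm{Q}(\alpha)$. Your version transports the defining inequalities of $\mathrm{C}^{m-1}(\alpha)$ directly through $\pi$ and then analyses which of the two resulting half-spaces is redundant, whereas the paper verifies the set equality by two inclusions in the long case and remarks that the short case is analogous; the difference is purely expository.
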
	

\begin{proof}
	Let  $$\pi: \mathbb{R}^{m-1}\longrightarrow \mathbb{R}^{m-2}$$ be the projection defined by $$\pi(x_{1},x_{2},\dots,x_{m-1})=(x_{1},x_{2},\dots,x_{m-2}).$$ 
	Note that $\pi$ restricts to an isomorphism on $\mathrm{H}(\alpha)$. 
	Since $\mathrm{P}(\alpha)\subseteq \mathrm{H}(\alpha)$, $\pi$ gives a bijection between $\mathrm{P}(\alpha)$ and $\pi(\mathrm{P}(\alpha))$. 
	We show that $\mathrm{Q}(\alpha)=\pi(\mathrm{P}(\alpha))$. 
	Suppose  $\{m-1,m\}$ is a long subset. 
	Then $\mathrm{H}(\alpha)$ does not intersects a facet of $\mathrm{C}^{m-1}(\alpha)$ given by $x_{m-1}=-\alpha_{m-1}$. 
	Also $H_{m-1}$ does not intersect $\mathrm{C}^{m-2}(\alpha)$. 
	Let $(x_{1},x_{2},\dots,x_{m-1})\in \mathrm{P}(\alpha)$. 
	Then the following inequality is clear: 
	$$\sum_{i=1}^{m-2}x_{i}=\alpha_{m}-x_{m-1}\geq \alpha_{m}-\alpha_{m-1}.$$ 
	This gives us $\pi(\mathrm{P}(\alpha))\subseteq \mathrm{Q}(\alpha)$. 
	Now we show the other inclusion. Let $(y_{1},y_{2},\dots,y_{m-2})\in \mathrm{Q}(\alpha)$. 
	Then it follows that $\displaystyle \sum_{i=1}^{m-2}y_{i}\geq \alpha_{m}-\alpha_{m-1}$. Let $a = \alpha_{m}-\displaystyle\sum_{i=1}^{m-2}y_{i}$ and $\Bar{y}=(y_{1},y_{2},\dots,y_{m-2},a)$. 
	Note that $\Bar{y}\in \mathrm{P}(\alpha)$ as $\displaystyle\sum_{i=1}^{m-2}y_{i}+a = \alpha_{m}$ and $|a|\leq \alpha_{m-1}$. 
	Since $\pi(\Bar{y})=(y_{1},y_{2},\dots,y_{m-2})$, $(y_{1},y_{2},\dots,y_{m-2})\in \pi(\mathrm{P}(\alpha))$. 
	Which gives $\mathrm{Q}(\alpha) \subseteq \pi(\mathrm{P}(\alpha))$. We conclude that $\mathrm{Q}(\alpha)=\pi(\mathrm{P}(\alpha))$.
	Similar arguments works when $\{m-1,m\}$ is a short subset.
\end{proof}

Note that the facets of $\mathrm{Q}(\alpha)$ are given by following equations.

\begin{itemize}
    \item $\mathrm{F}_{i} : x_{i}=\alpha_{1}$ for $1\leq i \leq m-2$.
    \item $\overline{\mathrm{F}}_{m-1} : \displaystyle \sum_{i=1}^{m-2}x_{i}= \alpha_{m}-\alpha_{m-1}$ when $\{m-1,m\}$ is long subset.
    \item $\mathrm{F}_{i} : x_{i}=-\alpha_{i}$ when $\{i,m\}$ is short subset.
    \item $\mathrm{F}_{m-1} : \displaystyle\sum_{i=1}^{m-2}x_{i}=\alpha_{m}+\alpha_{m-1}$ when $\{m-1,m\}$ is short subset.
\end{itemize}

Let $\mathscr{F}(\mathrm{Q}(\alpha))$ be the collection of facets of $\mathrm{Q}(\alpha)$. We define a map $$\chi_{\alpha} : \mathscr{F}(\mathrm{Q}(\alpha)) \longrightarrow \mathbb{Z}_{2}^{m-2}$$ as follows: 
\[ \chi_{\alpha}(\overline{\mathrm{F}}_{i})=
    \begin{dcases}
        -e_{i}, & 1\leq i \leq m-2, \\
        \displaystyle\sum_{i=1}^{m-2}e_{i}, & \text{~if~}\{m-1,m\} \hbox{~is long~},
    \end{dcases}
\]
and 
\[ \chi_{\alpha}(\mathrm{F}_{i})=
    \begin{dcases}
        e_{i}, & \hbox{~if~} \{i,m\} \hbox{~is short~},\\
        -\displaystyle\sum_{i=1}^{m-2}e_{i}, & \hbox{~if~} \{m-1,m\} \hbox{~is short~}.
    \end{dcases}
\]

The following result is clear. 
\begin{lemma}
The function $\chi_{\alpha}$ is characteristic for $\mathrm{Q}(\alpha)$.
\end{lemma}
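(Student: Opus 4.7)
The plan is to verify the defining condition of a characteristic function vertex-by-vertex, exploiting the rigid structure of $\chi_{\alpha}$: working modulo $2$, every coordinate facet of $\mathrm{Q}(\alpha)$ is assigned a standard basis vector $e_{i}$ for some $i \in [m-2]$, and every hyperplane facet is assigned the all-ones vector $\mathbf{1} := \sum_{i=1}^{m-2} e_{i}$, since any signs in the definition collapse in $\mathbb{Z}_{2}$.

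First I would classify the vertices of $\mathrm{Q}(\alpha)$ according to which facets meet at them. The key observation is that the two possible hyperplane facets $\overline{\mathrm{F}}_{m-1}$ and $\mathrm{F}_{m-1}$ lie in parallel affine hyperplanes (the common normal direction is $(1,1,\dots,1)$, and the two constants differ by $2\alpha_{m-1} > 0$), so no vertex lies on both. Consequently every vertex $v$ falls into one of two types: (I) $v$ lies only on coordinate facets, in which case by simplicity exactly one of the pair $\{\mathrm{F}_{i}, \overline{\mathrm{F}}_{i}\}$ is incident at $v$ for each $i \in [m-2]$; or (II) $v$ lies on exactly one hyperplane facet, with the remaining $m-3$ incident facets being coordinate facets corresponding to all indices in $[m-2] \setminus \{j_{0}\}$ for some single missing index $j_{0}$.

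The second step is to check unimodularity of the corresponding characteristic matrix in each case. In Type (I) the columns are (up to order) $e_{1}, \dots, e_{m-2}$, yielding the identity matrix, which is manifestly invertible over $\mathbb{Z}_{2}$. In Type (II) the columns are $\{e_{i} : i \neq j_{0}\}$ together with $\mathbf{1}$; the $j_{0}$-th row has a unique nonzero entry (the $1$ in the column of $\mathbf{1}$), so cofactor expansion along that row reduces the determinant to the $(m-3) \times (m-3)$ identity, giving $\det = 1$ in $\mathbb{Z}_{2}$.

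There is no genuine obstacle here. The crux is recognising that the parallelism of the two hyperplane facets forces at most one such facet per vertex; after that, the unimodularity check is elementary linear algebra. A minor point of care is bookkeeping which facets of $\mathrm{Q}(\alpha)$ actually exist (depending on which $\{i,m\}$ are $\alpha$-short), but since we need only verify the condition at actual vertices, this does not affect the argument.
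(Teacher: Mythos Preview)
Your proposal is correct. The paper itself offers no proof beyond asserting that the result is clear, so your direct vertex-by-vertex verification---splitting into the case of a cube vertex (all coordinate facets) versus a vertex on one of the parallel hyperplane facets---supplies exactly the routine check the paper omits, and the linear-algebra computation in each case is sound.
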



We can now state the main equivalence. 
\begin{theorem}
With the notation as above the chain space $\mathrm{Ch}(\alpha)$ is the small cover corresponding to the pair $(\mathrm{Q}(\alpha), \chi_{\alpha})$. 
\end{theorem}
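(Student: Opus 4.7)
The plan is to exhibit a locally standard $\mathbb{Z}_{2}^{m-2}$-action on $\mathrm{Ch}(\alpha)$ whose orbit space is $\mathrm{Q}(\alpha)$ and whose facet stabilizers match $\chi_{\alpha}$, and then invoke the Davis--Januszkiewicz reconstruction theorem mentioned in the paragraph preceding the definition of $\chi_{\alpha}$, which guarantees that a small cover is determined up to equivariant homeomorphism by its characteristic pair.

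First I would set up the action. The ambient torus $(S^{1})^{m-1}$ carries a natural $\mathbb{Z}_{2}^{m-1}$-action in which the $i$-th generator sends $v_{i}=(x_{i},y_{i})$ to $\bar{v}_{i}=(x_{i},-y_{i})$ and fixes the other coordinates. This action preserves the equation $\sum\alpha_{i}x_{i}=\alpha_{m}$ defining the chain space, and its diagonal $\mathbb{Z}_{2}$ subgroup is precisely the one modded out in \Cref{dcs}. Hence $\mathrm{Ch}(\alpha)$ inherits a $\mathbb{Z}_{2}^{m-1}/\mathrm{diag}\cong \mathbb{Z}_{2}^{m-2}$-action, and I would identify the quotient with $\mathbb{Z}_{2}^{m-2}$ by projecting onto the first $m-2$ coordinates; under this identification $e_{i}\mapsto e_{i}$ for $1\le i\le m-2$ and $e_{m-1}\mapsto \sum_{i=1}^{m-2}e_{i}$.

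Next I would show that the orbit map of this action is exactly $\pi\circ\mu\colon \mathrm{Ch}(\alpha)\to \mathrm{Q}(\alpha)$. Since $\mu$ only records the $x$-coordinates, it is $\mathbb{Z}_{2}^{m-2}$-invariant; over an interior point of $\mathrm{P}(\alpha)$ no $y_{i}$ vanishes, so the fibre has exactly $2^{m-1}/2=2^{m-2}$ elements, i.e.\ one free orbit. I would then compute the stabilizers on facets: a point mapping to the facet $x_{i}=\pm\alpha_{i}$ of $\mathrm{P}(\alpha)$ (for $1\le i\le m-2$) forces $y_{i}=0$, so the $i$-th reflection acts trivially, giving stabilizer $e_{i}\in \mathbb{Z}_{2}^{m-2}$, in agreement with $\chi_{\alpha}$. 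Similarly, a point lying above the facet $\sum x_{i}=\alpha_{m}\mp\alpha_{m-1}$ of $\mathrm{Q}(\alpha)$ must satisfy $y_{m-1}=0$, so the stabilizer is generated by $e_{m-1}$, which under the chosen identification is $\sum_{i=1}^{m-2}e_{i}$---again matching $\chi_{\alpha}$.

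Finally, I would verify local standardness: around a point lying over a vertex of $\mathrm{Q}(\alpha)$, the implicit function theorem applied to the smooth submanifold cut out by $\sum\alpha_{i}x_{i}=\alpha_{m}$ supplies local coordinates in which each of the vanishing $y_{i}$'s is acted on by an independent sign flip. The unimodularity of $\chi_{\alpha}$ at each vertex (the content of the preceding lemma) ensures that the incident-facet stabilizers form a basis of $\mathbb{Z}_{2}^{m-2}$, so the local model is the standard reflection representation. The reconstruction theorem then identifies $\mathrm{Ch}(\alpha)$ equivariantly with $X(\mathrm{Q}(\alpha),\chi_{\alpha})$. The main obstacle I anticipate is the careful bookkeeping around the facet $x_{m-1}=\pm\alpha_{m-1}$: one must check that under the projection $\pi$ this facet is correctly identified with the diagonal facet $\sum x_{i}=\alpha_{m}\mp\alpha_{m-1}$ of $\mathrm{Q}(\alpha)$ and that the descended stabilizer is precisely $\sum_{i=1}^{m-2}e_{i}$, as prescribed by $\chi_{\alpha}$.
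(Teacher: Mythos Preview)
Your proposal is correct and follows exactly the approach the paper takes: the paper's proof simply notes that the natural $\mathbb{Z}_{2}^{m-1}$-action on $(S^{1})^{m-1}$ descends to $\mathrm{Ch}(\alpha)$, that the diagonal element acts trivially so one must quotient by it to obtain an effective $\mathbb{Z}_{2}^{m-2}$-action, and then explicitly states that ``the remaining details are easy to verify hence omitted here.'' You have supplied precisely those omitted details---the identification of the orbit map with $\pi\circ\mu$, the computation of facet stabilizers matching $\chi_{\alpha}$, and the verification of local standardness---so your argument is a fleshed-out version of the paper's sketch rather than a different route.
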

\begin{proof}
The natural action of $\mathbb{Z}_{2}^{m-1}$ on $(S^{1})^{m-1}$ descends to $\mathrm{Ch}(\alpha)$.
However, this action is not effective. 
The element $(r,\dots,r)$ of $\mathbb{Z}_{2}^{m-1}$ (where $r$ generates a copy of $\mathbb{Z}_2$) fixes every element of $\mathrm{Ch}(\alpha)$.
So we get the effective action by dividing by the diagonal subgroup. 
The remaining details are easy to verify hence omitted here. 
\end{proof}

\begin{remark}
Note that the above charcterisitc vectors are normal to the corresponding facets. 
Hence, if all the $\alpha_i$'s are rational then the moment polytope is a lattice polytope and the corresponding small cover is a non-singular, real toric variety. 
We refer the reader to \cite[Section 7]{MT} for more on the characteristic functions of non-singular toric varieties. 
\end{remark}

\begin{example}
Let $\alpha=(1,2,2,2)$ $\beta=(1,1,2,1)$ and $\gamma=(2,2,2,1)$ be generic length vectors. The shaded regions denotes the corresponding moment polytopes. The corresponding characterstic functions are given by matrices 
$\chi_{\alpha} = \begin{bmatrix} 
1 & 0 & 1 & 1\\
0 & 1 & 0 & 1
\end{bmatrix}$, $\chi_{\beta} = \begin{bmatrix} 
1 & 0 & 1 & 1 & 0\\
0 & 1 & 0 & 1 & 1
\end{bmatrix}$ and $\chi_{\gamma} = \begin{bmatrix} 
1 & 1 & 0 & 1 & 1 & 0\\
0 & 1 & 1 & 0 & 1 & 1
\end{bmatrix}.$

\begin{figure}[H]
\centering
$\includegraphics[scale=0.43]{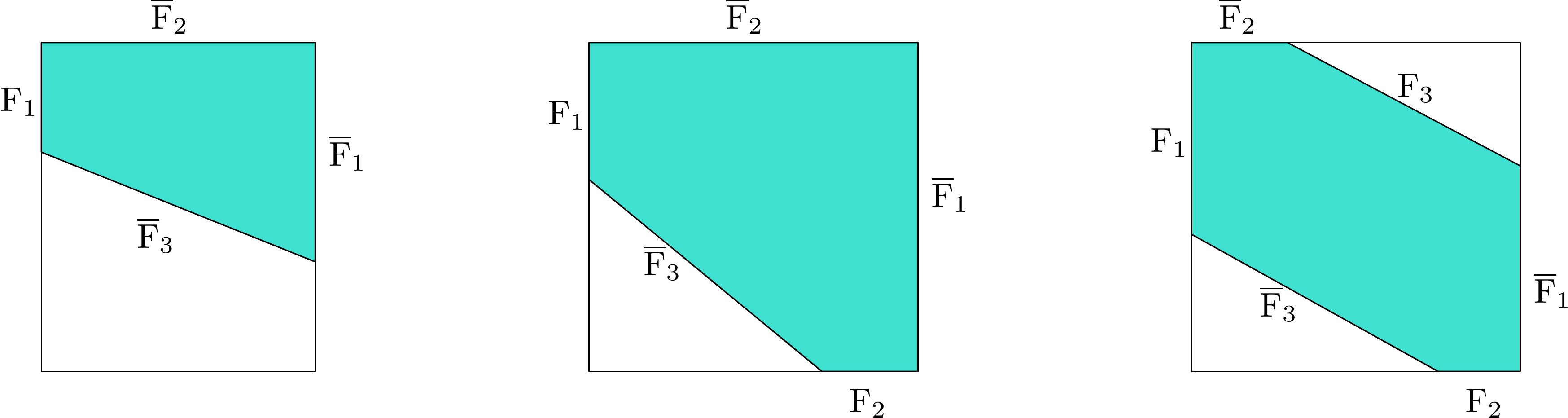}$
\caption{Moment polytopes} 
\end{figure}
\end{example}

\section{The proof of Theorem \ref{main}}\label{pmt}

A polytope is \emph{triangle-free} if it does not contain a triangular face of dimension $2$. 
It is easy to see that flagtopes are triangle-free. 
We state the important result due to G. Blind and R. Blind, which characterize those triangle-free polytopes that are also flagtopes. 
Recall that $I^k$ stands for the $k$-cube. 

\begin{theorem}\label{BLIND}\cite[Theorem 3]{GR}
If ${P}$ is a triangle-free convex polytope of dimension n then $f_{i}(P)\geq f_{i}(I^n)$ for $i=0,\dots, n$. 
In particular, such a polytope has at least $2n$ facets. 
Furthermore, if ${P}$ is simple then
\begin{enumerate}
\item $f_{n-1}(P)=2n$ implies that ${P}=I^n$ ;
\item $f_{n-1}(P)=2n+1$ implies that $P=P_{5}\times I^{n-2}$ where $P_5$ is a pentagon;
\item $f_{n-1}(P)=2n+2 $ implies that $P=P_{6} \times I^{n-2}$ or $P=Q\times I^{n-3}$ or $P = P_{5} \times P_{5} \times I^{n-4}$ where $P_{6}$ is a hexagon and $\mathrm{Q}$ is the $3$-polytope obtained from a pentagonal prism by truncating one of the edges forming a pentagonal facet.
\end{enumerate}
\end{theorem}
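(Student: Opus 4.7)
The plan is to proceed by induction on the dimension $n$, simultaneously maintaining two inductive hypotheses: (a) $f_i(P) \geq f_i(I^n)$ for all $0 \leq i \leq n$, and (b) $f_{n-1}(P) \geq 2n$. The base case $n=2$ is immediate, since a simple triangle-free $2$-polytope is a polygon with at least four sides. The key enabling observation for the induction step is that every facet $F$ of a simple triangle-free polytope $P$ is itself a simple triangle-free polytope of dimension $n-1$: its $2$-faces are $2$-faces of $P$, hence not triangles.

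For the facet bound in dimension $n$, I would fix a vertex $v$ lying on the $n$ facets $F_1,\ldots,F_n$ at $v$ (unique by simplicity). For each $i$, let $e_i = \bigcap_{k \neq i} F_k$ be the edge at $v$ not lying on $F_i$, and let $v_i$ be its other endpoint. Then $v_i$ lies on the facets $F_k$ for $k \neq i$, together with exactly one further facet $G_i$. I claim the $G_i$ are pairwise distinct and distinct from each $F_j$, giving $2n$ distinct facets. Here $G_i \neq F_j$ for $j \neq i$ by simplicity at $v_i$; $G_i \neq F_i$ because otherwise the edge $e_i$ would have both endpoints on $F_i$ and hence be contained in $F_i$, contradicting its definition; and $G_i \neq G_j$ because otherwise the $2$-face $\bigcap_{k \neq i,j} F_k$ would have its two edges leaving $v_i, v_j$ (other than $e_i, e_j$) both lying on $G_i = G_j$, forcing them to coincide and reducing the $2$-face to a triangle on the vertices $v, v_i, v_j$, contradicting triangle-freeness.

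Given this facet bound, I would derive the general inequality $f_i(P) \geq f_i(I^n)$ for $i \leq n-2$ by double counting. Since each $i$-face of a simple $n$-polytope is the intersection of exactly $n-i$ facets,
\[ (n-i)\, f_i(P) \;=\; \sum_{F \text{ facet of } P} f_i(F) \;\geq\; f_{n-1}(P) \cdot f_i(I^{n-1}) \;\geq\; 2n \cdot f_i(I^{n-1}), \]
where the first inequality applies the induction hypothesis to each facet. The elementary binomial identity $(n-i)\binom{n}{i} = n\binom{n-1}{i}$, combined with the formula $f_i(I^n) = \binom{n}{i}\, 2^{n-i}$, converts the right-hand side into $(n-i) f_i(I^n)$, yielding exactly the desired bound.

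The main obstacle will be the classification of the extremal cases. When $f_{n-1}(P) = 2n$ the vertex-neighbourhood argument above becomes rigid: every $2$-face considered must be a quadrilateral, and by induction each facet must be combinatorially a cube, from which a standard rigidity argument would identify $P$ with $I^n$. For $f_{n-1}(P) = 2n+1$ and $f_{n-1}(P) = 2n+2$ the analysis becomes substantially more delicate. Here one must track how the extra facets enlarge specific $2$-faces into pentagons or hexagons, and show that the non-quadrilateral $2$-faces are forced to concentrate inside a single tensor factor, producing the listed products with $P_5$, $P_6$, or the truncated pentagonal prism $Q$. Establishing this local-to-global propagation, and in particular ruling out hybrids of the three possibilities in case (3), is the technical core of the Blind--Blind argument.
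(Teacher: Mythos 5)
The statement you are addressing is not proved in the paper at all: it is quoted verbatim from Blind--Blind \cite{GR}, so the paper's ``proof'' is the citation, and your attempt has to be judged as a self-contained proof of that external theorem. On its own terms, your proposal is a sound sketch of the easy half, but only in the simple case. The vertex-neighbourhood count is correct: with $F_1,\dots,F_n$ the facets at $v$ and $G_i$ the extra facet at the neighbour $v_i$, one has $G_i\neq F_i$ (else $e_i\subseteq\bigcap_{k}F_k=\{v\}$) and $G_i\neq G_j$ (else the $2$-face $\bigcap_{k\neq i,j}F_k$ collapses to the triangle $v,v_i,v_j$), giving $f_{n-1}(P)\geq 2n$; and the double count $(n-i)f_i(P)=\sum_F f_i(F)$ combined with $(n-i)\binom{n}{i}=n\binom{n-1}{i}$ and induction on facets does yield $f_i(P)\geq f_i(I^n)$ for simple $P$. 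However, the first sentence of the theorem is asserted for arbitrary triangle-free polytopes, and every step of your argument uses simplicity essentially (exactly $n$ facets at a vertex, every $i$-face an intersection of exactly $n-i$ facets), so the non-simple case is simply not covered.

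The more serious gap is that the parts of the theorem the paper actually relies on are the extremal classifications (1)--(3): that $f_{n-1}(P)=2n$, $2n+1$, $2n+2$ force $P$ to be $I^n$, $P_5\times I^{n-2}$, or one of the three listed products (this is what lets the authors identify $\mathrm{P}(\alpha)$ with $I^{m-2}$, $P_5\times I^{m-4}$, $P_6\times I^{m-4}$ in Theorem \ref{MAIN}). Your proposal explicitly defers exactly this: ``a standard rigidity argument,'' ``the technical core of the Blind--Blind argument,'' with no indication of how to propagate the local structure of $2$-faces to a global product decomposition, nor how to exclude other combinatorial types with the same facet numbers (or hybrids in case (3)). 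That local-to-global rigidity is the nontrivial content of \cite{GR}, so what you have is a partial sketch of the inequality, not a proof of the theorem; for the purposes of this paper the correct move is what the authors do, namely to cite \cite{GR}.
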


The above theorem helps us classify length vectors $\alpha$'s for which the moment polytope ${P}(\alpha)$ is a flagtope. 
 
\begin{theorem}\label{MAIN}
Let $\alpha$ be a generic length vector then the moment polytope $\mathrm{P}(\alpha)$ is a  flagtope if and only if the short code of $\alpha$ is one of the following:
\begin{enumerate}
	\item $<\{1,2,\dots,m-3,m\}>$
	\item $<\{1,2,\dots,m-4,m-2,m\}>$
	\item $<\{1,2,\dots,m-4,m-1,m\}>$
\end{enumerate}
\end{theorem}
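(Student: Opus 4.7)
The plan is to combine the Blind-Blind dichotomy (\Cref{BLIND}) with the facet-count formula \Cref{FACET} and the face-poset description via admissible subsets (\Cref{POSET}, \Cref{Add}). Since every flagtope is triangle-free (three pairwise intersecting facets of a triangular $2$-face would have empty common intersection), \Cref{BLIND} applied to $\mathrm{P}(\alpha)$ in dimension $n = m-2$ forces $m-1+k \geq 2(m-2)$, hence $k \geq m-3$. Combined with the trivial bound $k \leq m-1$, the analysis splits into three cases $k \in \{m-3, m-2, m-1\}$, and for each, Blind-Blind pins down $\mathrm{P}(\alpha)$ up to one of a short list of flagtopes: $I^{m-2}$ when $k=m-3$; $P_5 \times I^{m-4}$ when $k=m-2$; and one of $P_6 \times I^{m-4}$, $\mathrm{Q} \times I^{m-5}$, or $P_5 \times P_5 \times I^{m-6}$ when $k=m-1$.

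For the sufficiency direction, I would verify for each of the three listed short codes that $\mathrm{P}(\alpha)$ is a flagtope of the expected type. By \Cref{addsc}, the admissible-subset poset $\mathrm{Ad}(\alpha)$ is determined by the short code, so it suffices to compute $\mathrm{Ad}(\alpha)$ directly via \Cref{Add} and check that its face poset matches that of the cube, the pentagonal prism $P_5 \times I^{m-4}$, or the hexagonal prism $P_6 \times I^{m-4}$, respectively. Concretely, the short code $<\{1,\dots,m-3,m\}>$ yields $k=m-3$ and an $\mathrm{Ad}(\alpha)$ indexed by sign patterns on the positive orthant $\{1,\dots,m-2\}$ of the appropriate shape, giving $I^{m-2}$; the second short code produces an extra admissible ``pentagon'' factor, and the third an extra ``hexagon'' factor. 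Since each target is a flagtope, so is $\mathrm{P}(\alpha)$.

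For the necessity direction, given a flagtope $\mathrm{P}(\alpha)$ with $k \in \{m-3, m-2, m-1\}$, I would rule out every short code not on the list by exhibiting an admissible $J$ of size $m-4$ for which the $2$-face $F_J$ is a triangle. The face $F_J$ is the moment polytope of a shorter length vector $\alpha(J) = (\alpha_a, \alpha_b, \alpha_c, |c_J|)$, and it is a triangle precisely when none of the three $2$-subsets $\{i,4\}$ in $\alpha(J)$ is short, equivalently when $|c_J|$ dominates in the sense that $|c_J| \geq \alpha_b+\alpha_c-\alpha_a$ and cyclic permutations. The key combinatorial observation is that any short code not of the form $<\{1,\dots,m-4,i,m\}>$ for $i \in \{m-3,m-2,m-1\}$---that is, either having multiple maximal short subsets containing $m$, or a unique maximal short subset that fails to include all of $\{1,\dots,m-4\}$---admits a choice of $J$ concentrating sign flips on ``low-weight'' indices so that $|c_J|$ becomes dominant, producing a triangular $F_J$. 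The main obstacle is exactly this systematic enumeration: one must, for each non-listed short code, pinpoint the offending $J$ and verify the dominance inequality, and then observe that the three listed short codes are precisely the ones for which no such $J$ exists.
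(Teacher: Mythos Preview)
Your overall scaffolding---bound $k$ via \Cref{FACET} and \Cref{BLIND}, then split into the cases $k\in\{m-3,m-2,m-1\}$---is exactly what the paper uses. The two directions, however, are argued differently, and your necessity direction has a genuine gap.

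For sufficiency, the paper does \emph{not} try to identify $\mathrm{Ad}(\alpha)$ globally with the face poset of a cube or prism as you propose. Instead it proves only that $\mathrm{P}(\alpha)$ is triangle-free: for each admissible $J$ with $|J|=m-4$ it exhibits, via a case analysis on where the large indices $m-3,\,m-2,\,m-1$ sit among $J_1$, $-J_2$, and $(J_1\cup -J_2)^c$, at least four admissible supersets of size $m-3$, so every $2$-face $F_J$ has at least four edges. Once triangle-freeness is established, \Cref{BLIND} together with the facet count immediately forces $\mathrm{P}(\alpha)\cong I^{m-2}$, $P_5\times I^{m-4}$, or $P_6\times I^{m-4}$. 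This is lighter than the full face-poset identification you suggest, since only the $2$-faces need to be inspected.

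For necessity the paper takes a much shorter route than your enumeration. It does not catalogue bad short codes and hunt for triangles. Rather, once the facet count pins down the combinatorial type of $\mathrm{P}(\alpha)$ via \Cref{BLIND}, the paper invokes \Cref{addsc} (isomorphic moment polytopes force equal short codes) together with the sufficiency direction: since the three listed codes already realise those polytopes, and the polytope determines the short code, $\alpha$ must carry one of the listed codes. Your proposed exhaustive triangle search would in principle also prove the result, but you yourself flag it as ``the main obstacle'' and do not carry it out---that is the gap. The paper's uniqueness argument via \Cref{addsc} bypasses that enumeration entirely.
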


\begin{proof}  
First we describe the idea of the proof, which is the same in each case. 
It follows from \Cref{FACET} that, the number of facets of $\mathrm{P}(\alpha)$ corresponding to each short code mentioned above, in that order, are $2(m-2)$, $2(m-2) + 1$ and $2(m-2) + 2$, respectively. 
Therefore, to show that the polytope $\mathrm{P}(\alpha)$ is triangle-free, we need to make sure that each $2$-dimensional face is not a $2$-simplex. 
Note that the $2$-dimensional faces of $\mathrm{P}(\alpha)$ correspond to admissible subsets of cardinality $m-4$. 
Therefore, in each case we determine these subsets and show that the corresponding $2$-dimensional faces have at least $4$ edges. 
Then we use \Cref{BLIND} to conclude $\mathrm{P}(\alpha)$ is a flagtope. 
We now analyze each short code. 
\smallskip
 
\noindent \textbf{Case 1.} \emph{The short code of $\alpha$ is $<\{1,2,\dots,m-3,m\}>$.}  
\vspace{1mm}

\noindent Let $J=J_{1}\cup J_{2}$ be an admissible subset of $[m-1]^{\pm}$ where $J_{1}\subset [m-1]^{+}$, $J_{2}\subset [m-1]^{-}$ with $J_{1}\cap -J_{2}=\emptyset$ and $|J|=m-4$. 
By \Cref{Add} we have that $J_{1}\cup\{m\}$ and $-J_{2}$ are $\alpha$-short subsets. 
Moreover $J_{1}\subset [m-3]$ as short code of $\alpha$ is $<\{1,2,\dots,m-3,m\}>$. 
Note that $\{m-2,m-1\}$ cannot be a subset of $-J_{2}$ since it is long. 
There are following three possibilities for $-J_{2}$.

\begin{enumerate}
\item We have \emph{$-J_{2}\subset [m-3]$}: 
Note that the facets of $F_{J}$ correspond to admissible subsets of cardinality $m-3$ containing $J$. 
We have, $\{i,m-2,m-1\}\subseteq (J_{1}\cup -J_{2})^c$ for $i\in [m-3]$. 
Clearly, the subsets $\{i\}\cup J_{1}\cup \{m\}$ and $\{i\}\cup -J_{2}$ are $\alpha$-short. 
Moreover, the subsets $\{m-2\}\cup -J_{2}$ and $\{m-1\}\cup -J_{2}$ are $\alpha$-short, since their complements contains long subsets,  $\{m-1,m\}$ and $\{m-2,m\}$ respectively. 
Therefore, the subsets $$\{i\}\cup J,\hspace{1mm} \{-i\}\cup J,\hspace{1mm} \{-(m-2)\}\cup J,\hspace{1mm} \{-(m-1)\}\cup J$$ are admissible. 
Note that these admissible subsets represents the facets of $F_{J}$. 
Clearly $F_{J}\cong \mathrm{I}^{2}$.

\item We have \emph{$m-2\in -J_{2}$ but $m-1\not\in -J_{2}$}: 
In this case we have $\{i,j,m-1\}\subseteq (J_{1}\cup -J_{2})^c$ where $\{i.j\}\subseteq [m-3]$. 
Note that the subsets, $$\{i\}\cup J_{1}\cup \{m\},\hspace{1mm} \{j\}\cup J_{1}\cup \{m\},\hspace{1mm} \{i\}\cup -J_{2},\hspace{1mm} \{j\}\cup -J_{2}$$ are $\alpha$-short. 
Therefore, the subsets $$\{i\}\cup J,\hspace{1mm} \{j\}\cup J,\hspace{1mm} \{-i\}\cup J,\hspace{1mm} \{-j\}\cup J$$ are admissible.  
Therefore, $2$-dimensional faces $F_{J}$ have at least four facets.

\item We have \emph{$m-1\in -J_{2}$ but $m-2\not\in -J_{2}$}: 
This is exactly similar to the earlier case.
\end{enumerate}

As a consequence of \Cref{FACET} we have, $f_{m-3}(\mathrm{P}(\alpha))=2(m-2)$.
Therefore, using \Cref{BLIND} we get $\mathrm{P}(\alpha)\cong I^{m-2}$, the $(m-2)$-cube.
\smallskip

\noindent \textbf{Case 2.} \emph{The short code of $\alpha$ is $<\{1,2,\dots,m-4,m-2,m\}>$.}
\vspace{1mm}

Note that $f_{m-3}(\mathrm{P}(\alpha))=2(m-2)+1$.
The short code information gives following possibilities for $J_{1}$ and $J_{2}$.
\smallskip
\begin{itemize}
\item \emph{Possibilities for $J_{1}$}: 
\begin{enumerate}
	\item $J_{1}\subset [m-3]$
	\item $m-2\in J_{1}$ and $J_{1}\setminus \{m-2\}\subset [m-4]$
\end{enumerate}
\item \emph{Possibilities for $J_{2}$}:
\begin{enumerate}
	\item  $-J_{2}\subset [m-2]$
	\item $-J_{2}=S \cup \{m-1\}$ where $S\subseteq [m-4]$ as $\{m-3,m-1\}$ and $\{m-2,m-1\}$ are long subsets.
\end{enumerate} 
\end{itemize}

\smallskip

\noindent We now consider all four possible combinations of $J_{1}$ and $J_{2}$.  
\smallskip

\begin{enumerate}
\item \emph{$J_{1}\subset [m-3]$ and $-J_{2}\subset [m-2]$} : 

Note that $\{i,j,m-1\}\subseteq (J_{1}\cup -J_{2})^c$ where $\{i.j\}\subseteq [m-2]$. Consider the following possibilities for $\{i.j\}$ :

\begin{enumerate}
\item $\{i,j\}\subset[m-3]$ : 
\vspace{1mm}

\noindent Observe that $S\cup \{m\}$ is $\alpha$-short if $S\subset [m-3]$. Therefore, $\{i\}\cup J_{1}\cup \{m\}$ and $\{j\}\cup J_{1}\cup \{m\}$ are $\alpha$-short subsets. 
Since $\{m-1,m\}$ is long, any subset of $[m-2]$ is $\alpha$-short.  Consequently, $\{i\}\cup-J_{2}$ and $\{j\}\cup -J_{2}$ are $\alpha$-short subsets. Hence  $$\{i\}\cup J,\hspace{1mm} \{j\}\cup J,\hspace{1mm} \{-i\}\cup J,\hspace{1mm} \{-j\}\cup J$$ are admissible subsets. Therefore, the $2$-dimensional faces $F_{J}$ have at least four edges. 
\smallskip

\item \emph{$i\in [m-3]$ and $j=m-2$} : 

In this case we have $\{i,m-2,m-1\}\subseteq (J_{1}\cup -J_{2})^c$.
Consider the following three sub-cases : 

\begin{enumerate}
\item \emph{If $ i\in [m-4]$ and $m-3\in J_{1}$ :}   
\vspace{1mm}

\noindent Note that $\{i\} \cup J_{1} \cup \{m\}$, $\{i\}\cup -J_{2}$ and $\{m-2\}\cup -J_{2}$  are short subsets. Since $m-3\in J_{1}$, $m-3\notin -J_{2}$. Therefore, $-J_{2}\subset [m-4]$. Hence, a subset $\{m-1\}\cup -J_{2}$ is $\alpha$-short. Therefore,  $$\{i\}\cup J,\hspace{1mm} \{-i\}\cup J,\hspace{1mm} \{-(m-2)\}\cup J,\hspace{1mm} \{-(m-1)\}\cup J$$ are admissible subsets. 
\smallskip

\item \emph{If $i\in[m-4]$ and $m-3\in -J_{2}$ :}  
This case is exactly same as above case.

\item \emph{If $i=m-3$ :}  
\vspace{1mm}

Note that $\{m-3,m-2,m-1\}\subseteq (J_{1}\cup -J_{2})^c$.
It is easy to see that the following subsets  $$\{m-3\}\cup J,\hspace{1mm} \{m-2\}\cup J,\hspace{1mm} \{-(m-3)\}\cup J,\hspace{1mm} \{-(m-2)\}\cup J,\hspace{1mm} \{-(m-1)\}\cup J$$ are admissible containing $J$ and represents the facets of $F_{J}$. Therefore, $2$-dimensional faces $F_{J}$ have exactly five edges. 
\end{enumerate}
\end{enumerate}
\smallskip

\item \emph{$J_{1}\subset [m-3]$ and $-J_{2}=S\cup \{m-1\}$ where $S\subset [m-4]$ :}  

In this case we have $\{i,j,m-2\}\subseteq (J_{1}\cup -J_{2})^c$ where $\{i.j\}\subseteq [m-3]$. Consider the following possibilities for $\{i.j\}$ :

\begin{enumerate}
\item \emph{Suppose $\{i.j\}\subset[m-4]$ :}  
\vspace{1mm}

\noindent It is easy to see that the following subsets $$\{i\}\cup J_{1}\cup \{m\},\hspace{1mm} \{j\}\cup J_{1},\hspace{1mm} \{j\}\cup -J_{2},\hspace{1mm} \{j\}\cup -J_{2}$$ are $\alpha$-short. Hence, the subsets $$\{i\}\cup J,\hspace{1mm} \{j\}\cup J,\hspace{1mm} \{-i\}\cup J,\hspace{1mm} \{-j\}\cup J$$ are  admissible. 
\smallskip

\item \emph{Suppose $i\in [m-4]$ and $j=m-3$ :}  

In this case we have $\{i,m-3,m-2\}\subseteq (J_{1}\cup -J_{2})^c$.
Since $m-3\notin J_{1}$, $J_{1}\subset [m-4]$. Hence the subsets, $\{m-3\}\cup J_{1}\cup \{m\}$ and $\{m-2\}\cup J_{1}\cup \{m\}$ are $\alpha$-short. Note that, $\{i\}\cup J_{1}$ and $\{i\}\cup -J_{2}$ are also $\alpha$-short subsets. Consequently, $$\{m-3\}\cup J,\hspace{1mm} \{m-2\}\cup J,\hspace{1mm} \{i\}\cup J,\hspace{1mm} \{-i\}\cup J$$  are admissible subsets. 
\end{enumerate}
\smallskip

\item \emph{$m-2\in J_{1}$, $J_{1}\setminus \{m-2\}\subset [m-4]$ and $-J_{2}\subset [m-2]$ :}  

Observe that, $\{i,j,m-1\}\subseteq (J_{1}\cup -J_{2})^c$ where $\{i.j\}\subseteq [m-3]$. Consider the following possibilities for $\{i.j\}$ :

\begin{enumerate}
\item \emph{If $\{i,j\}\subset [m-4]$ :}  
\vspace{1mm}

\noindent Clearly the subsets, $$\{i\}\cup J,\hspace{1mm} \{j\}\cup J,\hspace{1mm} \{-i\}\cup J,\hspace{1mm} \{-j\}\cup J$$ are admissible. 
\smallskip 

\item \emph{If $i\in [m-4]$ and $j= m-3$ :}  
\vspace{1mm}

In this case we have $\{i,m-3,m-1\}\subseteq (J_{1}\cup -J_{2})^c$ .
Since $m-2\in J_{1}$, $m-2\notin -J_{2}$. On the other hand, since  $m-3\notin -J_{2}$, $-J_{2}\subset [m-4]$. Note that, $S \cup \{m-1\}$ is $\alpha$-short for $S\subset [m-4]$. Therefore, $\{m-1\}\cup -J_{2}$ is $\alpha$-short subset. Consequently $\{-(m-1)\}\cup J$  is $\alpha$-admissible subset. Hence, $$\{i\}\cup J,\hspace{1mm} \{-i\}\cup J,\hspace{1mm} \{-(m-3)\}\cup J,\hspace{1mm} \{-(m-1)\}\cup J$$ are admissible subsets. 
\end{enumerate}
\smallskip

\item \emph{$m-2\in J_{1}$, $J_{1}\setminus \{m-2\}\subset [m-4]$ and $J_{2}=S\cup [m-1]$ , $S\subset [m-4]$ :}  

Here, we have $\{i,j,m-3\}\subseteq (J_{1}\cup -J_{2})^c$ where $\{i,j\}\subseteq [m-4]$. 
Clearly the subsets $$\{i\}\cup J,\hspace{1mm} \{-i\}\cup J,\hspace{1mm} \{j\}\cup J,\hspace{1mm} \{-j\}\cup J$$ are admissible subsets. 
\end{enumerate}
\smallskip
Finally we conclude that  $\mathrm{P}(\alpha)$ is  triangle-free if the short code of $\alpha$ is $<\{1,2,\dots,m-4,m-1,m\}>$. 
Moreover using Theorem \ref{BLIND} and Lemma \ref{FACET}, we get $\mathrm{P}(\alpha)\cong P_{5}\times I^{m-4}$, where $P_{5}$ is a pentagon.

\smallskip
\noindent \textbf{Case 3.} \emph{The short code of $\alpha$ is $<\{1,2,\dots,m-4,m-1,m\}>$.}
\vspace{1mm} 

Note that the short code of $\alpha$ gives us the following possibilities of $J_{1}$ and $J_{2}$ :
\smallskip

\begin{itemize}
\item \emph{Possibilities  for $J_{1}$:} 
\begin{enumerate}
	\item $J_{1}\subseteq [m-3]$
	\item $J_{1}\subseteq [m-4]$ and $m-2\in J_{1}$
	\item $J_{1}\subseteq [m-4]$ and $m-1\in J_{1}$
\end{enumerate}
\smallskip

\item \emph{Possibilities for $J_{2}$} 
\begin{enumerate}
    \item $-J_{2}\subseteq [m-3]$
    \item $-J_{2}\subseteq [m-4]$ and $m-2 \in -J_{2}$
    \item $-J_{2}\subseteq [m-4]$ and $m-1 \in -J_{2}$
\end{enumerate}
\end{itemize}

We consider the all nine possible combinations of $J_{1}$ and $J_{2}$. Let's begin with the fist combination:
\smallskip

\begin{enumerate}
\item \emph{$J_{1}\subseteq [m-3]$ and $-J_{2}\subseteq [m-3]$ :}  

Observe that, $\{i,m-2,m-1\}\subseteq (J_{1}\cup -J_{2})^c$ where $i\in [m-3]$.
Since $\{m-2,m-1\}$ is a long, the subsets $\{i\}\cup J_{1}\cup \{m\}$ and $i\cup -J_{2}$ are $\alpha$-short. Consider the following possibilities :

\begin{enumerate}
\item \emph{$m-3\in J_{1}$ :} 
\vspace{1mm}

Since $J_{1}\cap -J_{2}=\emptyset$, $-(m-3)\notin J_{2}$. Hence $-J_{2}\subseteq [m-4]$. Therefore, $\{m-2\}\cup -J_{2}$ and $\{m-1\}\cup -J_{2}$ are $\alpha$-short subsets. Consequently, the subsets $$\{i\}\cup J,\hspace{1mm} \{-i\}\cup J,\hspace{1mm} \{-(m-2)\}\cup J,\hspace{1mm} \{-(m-1)\}\cup J$$ are admissible. 
\smallskip 

\item \emph{$m-3\notin J_{1}$}:  
\vspace{1mm}

\noindent Therefore $J_{1}\subseteq [m-4]$. Hence, the subsets $\{m-2\} \cup J_{1}\cup \{m\}$ and $\{m-1\} \cup J_{1}\cup \{m\}$ are $\alpha$-short. Consequently, the subsets $$\{i\}\cup J,\hspace{1mm} \{-i\}\cup J,\hspace{1mm} \{m-2\}\cup J,\hspace{1mm}   \{m-1\}\cup J$$ are admissible. 
\end{enumerate} 
\smallskip

\item \emph{$J_{1}\subseteq [m-3]$ and $-J_{2}\subseteq [m-4]$, $m-2 \in -J_{2}$}:  

Since $\{m-3.m-2\}$ is long, $m-3\notin -J_{2}$. In this case we have $\{i,j,m-1\}\subseteq (J_{1}\cup -J_{2})^c$ where $\{i,j\}\subseteq [m-3]$. Therefore, the subsets $\{i\}\cup J_{1}\cup \{m\}$ and $\{j\}\cup J_{1}\cup \{m\}$ are $\alpha$-short. Consider the following possibilities:

\begin{enumerate}
\item \emph{$m-3\in J_{1}$}:  

Hence $\{i,j\}\subseteq [m-4]$. Therefore, $\{i\}\cup -J_{2}$ and $\{j\}\cup -J_{2}$ are $\alpha$-short subsets. Hence, the subsets $$\{i\}\cup J,\hspace{1mm} \{j\}\cup J,\hspace{1mm} \{-i\}\cup J,\hspace{1mm} \{-j\}\cup J$$ are admissible. 
\smallskip

\item \emph{$m-3\notin J_{1}$ :}  
\vspace{1mm}

\noindent Hence $J_{1}\subseteq [m-4]$. Therefore, $\{m-1\} \cup J_{1}\cup \{m\}$ is a $\alpha$-short subset. Since $m-3\notin -J_{2}$ ,  $\{i,m-3,m-1\}\subseteq (J_{1}\cup -J_{2})^c$ where $i\in [m-4]$. So $\{i\}\cup -J_{2}$ is $\alpha$-short subset. Therefore, the subsets, $$\{i\}\cup J,\hspace{1mm}  \{-i\}\cup J,\hspace{1mm} \{m-3\}\cup J,\hspace{1mm} \{m-1\}\cup J$$ are admissible. 
\end{enumerate}
\smallskip

\item \emph{$J_{1}\subseteq [m-3]$ and $-J_{2}\subseteq [m-4]$, $m-1 \in -J_{2}$ :} This case is exactly similar to the above case.

\item \emph{$J_{1}\subseteq [m-4]$, $m-2 \in J_{1}$ and $-J_{2}\subseteq [m-3]$ :}

In this case we have $\{i,j,m-1\}\subseteq (J_{1}\cup -J_{2})^c$ where $\{i,j\}\subseteq [m-3]$. Consider the following possibilities :

\begin{enumerate}
\item \emph{$m-3 \in -J_{2}$ :}  

Hence $\{i,j\}\subseteq [m-4]$. Therefore, the subsets, $$\{i\}\cup J_{1}\cup \{m\},\hspace{1mm} \{j\}\cup J_{1}\cup \{m\},\hspace{1mm} \{i\}\cup -J_{2},\hspace{1mm}  \{j\}\cup -J_{2}$$ are $\alpha$-short. Hence the subsets, $$\{i\}\cup J,\hspace{1mm} \{j\}\cup J,\hspace{1mm} \{-i\}\cup J,\hspace{1mm} \{-j\}\cup J$$ are admissible. 
\smallskip

\item \emph{$m-3\notin -J_{2}$ :}  

In this case we have $\{i,m-3,m-1\}\subseteq (J_{1}\cup -J_{2})^c$ where $i\in [m-4]$.
Therefore, $$\{i\} \cup J_{1}\cup \{m\},\hspace{1mm} \{i\}\cup -J_{2},\hspace{1mm} \{m-3\}\cup -J_{2},\hspace{1mm} \{m-1\}\cup -J_{2}$$ are $\alpha$-short subsets. Consequently, the subsets $$\{i\}\cup J,\hspace{1mm} \{-i\}\cup J,\hspace{1mm} \{-(m-3)\}\cup J,\hspace{1mm} \{-(m-1)\}\cup J$$ are admissible. 
\end{enumerate}
\smallskip

\item \emph{$J_{1}\subseteq [m-4]$, $m-2 \in J_{1}$ and $-J_{2}\subseteq [m-4]$ and $m-2 \in -J_{2}$ :}  
\vspace{1mm}

\noindent This case doesn't not arise as $J_{1}\cap -J_{2}\neq \emptyset$.
\smallskip

\item \emph{$J_{1}\subseteq [m-4]$, $m-2 \in J_{1}$ and $-J_{2}\subseteq [m-4]$, $m-1 \in -J_{2}$ :}  

In this case we have $\{i,j,m-3\}\subseteq (J_{1}\cup -J_{2})^c$ where $\{i.j\}\subseteq [m-4]$.
It is easy to see that the subsets,
$$\{i\}\cup J_{1}\cup \{m\},\hspace{1mm} \{j\}\cup J_{1}\cup \{m\},\hspace{1mm} \{i\}\cup -J_{2},\hspace{1mm} \{j\}\cup -J_{2}$$ are $\alpha$-short subsets. Hence the subsets $$\{i\}\cup J,\hspace{1mm} \{j\}\cup J,\hspace{1mm} \{-i\}\cup J,\hspace{1mm} \{-j\}\cup J$$ are admissible.  
\smallskip

\item \emph{$J_{1}\subseteq [m-4]$, $m-1 \in J_{1}$ and $-J_{2}\subseteq [m-3]$ :}  
This case is exactly similar to the case where, \emph{$J_{1}\subseteq [m-4]$, $m-2 \in J_{1}$ and $-J_{2}\subseteq [m-3]$}.

\item \emph{$J_{1}\subseteq [m-4]$, $m-1 \in J_{1}$ and $-J_{2}\subseteq [m-4]$, $m-2 \in -J_{2}$ :}  
This case is exactly similar to the case where, \emph{$J_{1}\subseteq [m-4]$, $m-2 \in J_{1}$ and $-J_{2}\subseteq [m-4]$, $m-1 \in -J_{2}$ :}

\item \emph{$J_{1}\subseteq [m-4]$ and $m-1 \in J_{1}$ and $-J_{2}\subseteq [m-4]$ and $m-1 \in -J_{2}$ }  
\noindent This case doesn't arise as $J_{1}\cap -J_{2}\neq \emptyset$. 
\end{enumerate}
Moreover, using \Cref{BLIND} and \Cref{FACET}, we get $\mathrm{P}(\alpha)\cong P_{6}\times I^{m-4}$, where $P_{6}$ is a hexagon. 

Now we prove the converse. Recall that, the \Cref{addsc} says that the poset of an admissible subsets determine the short code of a length vector. In particular, for generic length vectors $\alpha$ and $\beta$, if $\mathrm{P}(\alpha)\cong \mathrm{P}(\beta) $ then the short codes of $\alpha$ and $\beta$ coincides. Suppose, $\mathrm{P}(\alpha)$ is a flagtope for a generic length vector $\alpha$. Note that \Cref{FACET} give, $$2(m-2)\leq f_{m-3}(\mathrm{P}(\alpha))\leq 2(m-2)+2.$$ 
Therefore, there are only three possibilities for the number of facets of $\mathrm{P}(\alpha)$. 
Suppose that $f_{m-3}(\mathrm{P}(\alpha)) = 2(m-2)$. 
\Cref{BLIND} shows that $\mathrm{P}(\alpha)\cong I^{m-2}$, a $m-2$-cube. 
But we have shown that $I^{m-2}$ is the moment polytope for short code $<\{1,2,\dots,m-3,m\}>$. 
Therefore, the short code of $\alpha$ is $<\{1,2,\dots,m-3,m\}>$ whenever $\mathrm{P}(\alpha)$ is flagtope and $f_{m-3}(\mathrm{P}(\alpha)) = 2(m-2)$.
Similar arguments works when $f_{m-3}(\mathrm{P}(\alpha)) = 2(m-2)+1$ and $f_{m-3}(\mathrm{P}(\alpha)) = 2(m-2)+2$. 
\end{proof}

{The proof of \Cref{main} now follows from \Cref{MAIN} and \Cref{artv}.} 

\begin{example}
Below are some examples.
\begin{enumerate}
\item If $\alpha=(1,1,3,3,3)$ then the short code of $\alpha$ is $<\{1,2,5\}>$ and $\mathrm{P}(\alpha)\cong I^{3}$.
	
\begin{figure}[H]
	\centering
	$\includegraphics[scale=1]{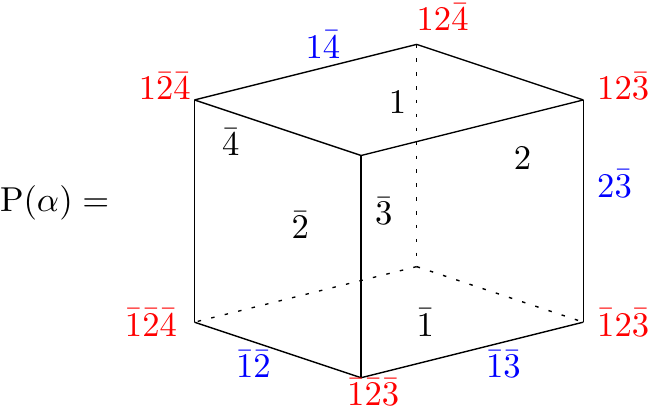}$
	\caption{}
\end{figure}

	\item If $\alpha=(1,2,2,5,3)$ then the short code of $\alpha$ is $<\{1,3,5\}>$ and $\mathrm{P}(\alpha)\cong P_{5}\times I$.

\begin{figure}[H]
	\centering
	$\includegraphics[scale=1]{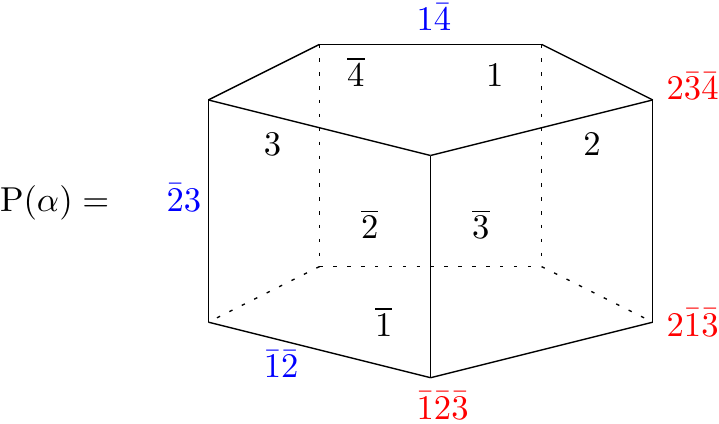}$
	\caption{}
\end{figure}
	
\item If $\alpha=(1,3,3,3,1)$ then the short code of $\alpha$ is $<\{1,4,5\}>$ and $\mathrm{P}(\alpha)\cong P_{6}\times I$.

\begin{figure}[H]
	\centering
	$\includegraphics[scale=1]{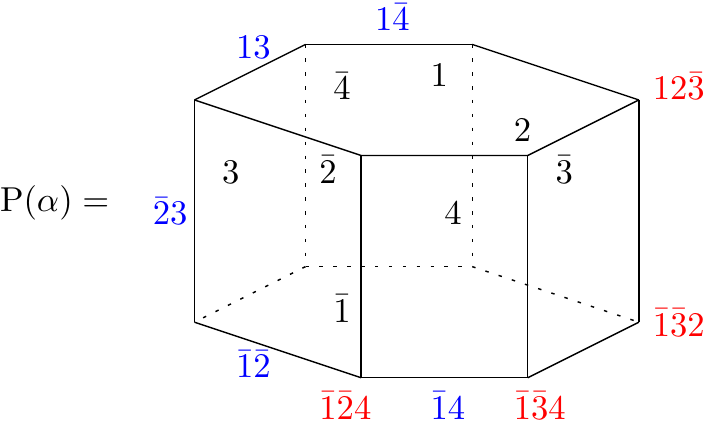}$
	\caption{}
\end{figure}	
\end{enumerate}
\end{example}

We now describe the homeomorphism type of these aspherical chain spaces. 
However, before that we mention an important and relevant result about polygon spaces. 
Let $<A_{1},\dots,A_{k}>$ be the genetic code of $\alpha=(\alpha_{2},\dots,\alpha_{m})$. 
Consider another generic length vector $\alpha^{+}$ whose genetic code is $<B_{1},\dots,B_{k}>$ where $B_{i}=\{a+1 : a\in A_{i}\}\cup \{1\}$. 
Note that $\alpha^{+}$ is an $m$-tuple. 
Hausmann \cite[Proposition 2.1]{hgdps} described a relationship between the polygon spaces $\overline{\mathrm{M}}_{\alpha^{+}}$ and $\overline{\mathrm{M}}_{\alpha}$.

\begin{proposition}\label{fib}
There polygon space $\overline{\mathrm{M}}_{\alpha^{+}}$ is diffeomorphic to the fibered product $S^{1}\times_{O(1)} \mathrm{M}_{\alpha}$, where $O(1)$ acts diagonally. 
\end{proposition}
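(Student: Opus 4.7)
The plan is to use the fact (implicit in our earlier invocations of the genetic code) that the diffeomorphism type of $\overline{\mathrm{M}}_{\alpha^{+}}$ depends only on the genetic code, and hence I may realize $\alpha^{+}$ as $(\epsilon,\alpha_{2},\ldots,\alpha_{m})$ for any sufficiently small $\epsilon>0$. Choosing $\epsilon$ so small that adjoining the index $1$ to any maximal $\alpha$-short subset of $\{2,\ldots,m\}$ remains short yields precisely the stated genetic code $\langle B_{1},\ldots,B_{k}\rangle$. Thus the problem reduces to comparing two smooth spaces constructed from a length vector with one very short side.

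Next, I would view $\tilde{\mathrm{M}}_{\alpha^{+}}\subset (S^{1})^{m}$ via the auxiliary closure map
\[
f\colon (S^{1})^{m-1}\longrightarrow \mathbb{R}^{2},\qquad f(w_{2},\ldots,w_{m})=\sum_{i=2}^{m}\alpha_{i}w_{i}.
\]
The coordinate $w_{1}$ is forced by the closure condition to be $-\epsilon^{-1}f(w_{2},\ldots,w_{m})$, so the projection $(w_{1},\ldots,w_{m})\mapsto (w_{2},\ldots,w_{m})$ identifies $\tilde{\mathrm{M}}_{\alpha^{+}}$ with $f^{-1}(S^{1}_{\epsilon})$, where $S^{1}_{\epsilon}\subset\mathbb{R}^{2}$ is the circle of radius $\epsilon$. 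Since $\alpha$ is generic, $0$ is a regular value of $f$ and $f$ remains a submersion in a neighborhood of $f^{-1}(0)=\tilde{\mathrm{M}}_{\alpha}$. The normal bundle of $\tilde{\mathrm{M}}_{\alpha}$ in $(S^{1})^{m-1}$ is the pullback of $T\mathbb{R}^{2}$ and is therefore trivial, which lets me choose a smooth trivialization
\[
\Psi\colon f^{-1}(S^{1}_{\epsilon})\xrightarrow{\cong} S^{1}\times \tilde{\mathrm{M}}_{\alpha}
\]
for $\epsilon$ small, so that $\Psi$ intertwines $f$ with the projection to $S^{1}$.

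Now I would track the $O(2)$-equivariance, which is the technical crux. Diagonal rotation and reflection on $(S^{1})^{m}$ preserve $\tilde{\mathrm{M}}_{\alpha^{+}}$; under $\Psi$ they correspond to the diagonal action on $S^{1}\times\tilde{\mathrm{M}}_{\alpha}$, where $SO(2)$ rotates both factors (using $f$-equivariance) and the involution acts by complex conjugation on $S^{1}$ and by the involution $\tau$ on $\tilde{\mathrm{M}}_{\alpha}$. Because $\alpha$ is generic, the $SO(2)$-action on $\tilde{\mathrm{M}}_{\alpha}$ is free, so the diagonal $SO(2)$-action on $S^{1}\times\tilde{\mathrm{M}}_{\alpha}$ is free and its quotient is canonically $S^{1}\times \mathrm{M}_{\alpha}$ (use the slice given by a fixed $SO(2)$-orbit representative of $\tilde{\mathrm{M}}_{\alpha}$). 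Quotienting by the remaining $\mathbb{Z}_{2}$ then yields
\[
\overline{\mathrm{M}}_{\alpha^{+}}\;=\;\tilde{\mathrm{M}}_{\alpha^{+}}/O(2)\;\cong\;(S^{1}\times \mathrm{M}_{\alpha})/\mathbb{Z}_{2}\;=\;S^{1}\times_{O(1)}\mathrm{M}_{\alpha}.
\]

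The main obstacle is Step 3 combined with Step 4: the trivialization $\Psi$ has to be chosen $O(2)$-equivariantly so that the rotation action transports to the standard diagonal rotation on $S^{1}\times\tilde{\mathrm{M}}_{\alpha}$ and the reflection action to conjugation times $\tau$. To handle this cleanly I would construct $\Psi$ using geodesic flow with respect to an $O(2)$-invariant Riemannian metric on $(S^{1})^{m-1}$ along the trivial normal direction, which automatically yields an equivariant trivialization and makes the identification of induced actions in Step 4 routine.
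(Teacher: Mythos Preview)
The paper does not supply its own proof of this proposition; it is quoted from Hausmann \cite[Proposition 2.1]{hgdps}, so there is no argument in the paper to compare against. Your outline is close in spirit to the standard approach, but there is a real gap in the step where you pass from $S^{1}\times\tilde{\mathrm{M}}_{\alpha}$ to $S^{1}\times\mathrm{M}_{\alpha}$.

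You assert that the quotient of $S^{1}\times\tilde{\mathrm{M}}_{\alpha}$ by the \emph{diagonal} $SO(2)$-action is ``canonically $S^{1}\times\mathrm{M}_{\alpha}$,'' justified by choosing ``a fixed $SO(2)$-orbit representative of $\tilde{\mathrm{M}}_{\alpha}$.'' But choosing such representatives continuously over $\mathrm{M}_{\alpha}$ is precisely choosing a global section of the principal $SO(2)$-bundle $\tilde{\mathrm{M}}_{\alpha}\to\mathrm{M}_{\alpha}$, i.e.\ assuming this bundle is trivial; you have not justified this. Indeed, since $SO(2)$ acts freely and transitively on the $S^{1}$ factor, the natural slice is $\{1\}\times\tilde{\mathrm{M}}_{\alpha}$, so the quotient is canonically $\tilde{\mathrm{M}}_{\alpha}$, with the residual $\mathbb{Z}_{2}$ acting by complex conjugation $\sigma$. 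What you therefore still need is $\tilde{\mathrm{M}}_{\alpha}/\sigma\cong S^{1}\times_{O(1)}\mathrm{M}_{\alpha}$. The fix is short but essential: the first-edge map $\tilde{\mathrm{M}}_{\alpha}\to S^{1}$, $(v_{2},\dots,v_{m})\mapsto v_{2}$, is $SO(2)$-equivariant and hence trivializes the bundle, giving $\tilde{\mathrm{M}}_{\alpha}\cong S^{1}\times\mathrm{M}_{\alpha}$; a direct check shows that under this identification $\sigma$ becomes $(\mathrm{conj},\tau)$, which yields the claim. Once this is inserted your argument is complete --- and in fact this first-edge normalization lets you bypass the equivariant tubular-neighborhood construction altogether, since the diffeomorphism can then be written down explicitly.
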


Recall that $$q: \mathrm{M}_{\alpha}\longrightarrow \overline{\mathrm{M}}_{\alpha} $$ is a double cover. 
This double cover helps us to define a natural map $$\Phi : \overline{\mathrm{M}}_{\alpha^{+}}\longrightarrow \overline{\mathrm{M}}_{\alpha}.$$ 
It is easy to see that $\Phi$ is an $S^{1}$-fibration.

Now we return to aspherical chain spaces. 
Recall \Cref{cspps}; it says that given a generic length vector $\alpha=(\alpha_{1},\dots,\alpha_{m})$ we have an isomorphism between $$\mathrm{Ch}(\alpha)\longrightarrow \overline{\mathrm{M}}_{\alpha^{\prime}}$$ where 
$$\alpha^{\prime} = (\alpha_{1},\alpha_{2},\dots,\alpha_{m-1},\delta,\alpha_{m}+\delta)$$ for some positive real number $\delta > \sum_{i=1}^{m-1}\alpha_{i}$.

Suppose the short code of $\alpha$ is $<1,2,\dots,m-3,m>$. 
Then the genetic code of $\alpha^{\prime}$ is $<1,2,\dots,m-3,m+1>$. 
Let $\beta$ be a generic length vector whose genetic code is $<1,2,\dots,m-4,m>$. 
Clearly, $\beta^{+}=\alpha^{\prime}$. 
Let $\alpha(1)=(\alpha_{2},\dots,\alpha_{m}+\alpha_{1})$ be a generic length vector.
Note that the short code of $\alpha(1)$ is $<1,2,\dots, m-4,m-1>$ with $\alpha(1)^{\prime}=\beta$. We have an isomorphism $\mathrm{Ch}(\alpha(1))\longrightarrow \overline{\mathrm{M}}_{\alpha(1)^{\prime}}$. 
Using Hausmann's $S^{1}$-fibration described in the proof of Proposition \ref{fib}, we have the following maps.
$$\mathrm{Ch}(\alpha) \stackrel{\cong}{\longrightarrow} \overline{\mathrm{M}}_{\alpha^{\prime}} \stackrel{\Phi_{1}}{\longrightarrow} \overline{\mathrm{M}}_{\beta}\stackrel{\cong}{\longrightarrow} \mathrm{Ch}(\alpha(1)).$$ Clealry, the above composition of maps gives a $S^{1}$-fibration $\tilde{\Phi}$ from $$\mathrm{Ch}(\alpha)\stackrel{\tilde{\Phi}_{1}}{\longrightarrow} \mathrm{Ch}(\alpha(1)).$$ 

Let $I_{j}=\{1,2,\dots,j\}$.
By induction we get the chain of $S^{1}$-fibrations

$$\mathrm{Ch}(\alpha)\stackrel{\tilde{\Phi}_{1}}{\longrightarrow} \mathrm{Ch}(\alpha(I_{1}))\stackrel{\tilde{\Phi}_{2}}{\longrightarrow}\mathrm{Ch}(\alpha(I_{2}))\stackrel{\tilde{\Phi}_{3}}{\longrightarrow}\dots \stackrel{\tilde{\Phi}_{m-3}}{\longrightarrow} \mathrm{Ch}(\alpha(I_{m-3}))\stackrel{\tilde{\Phi}_{m-2}}{\longrightarrow} \{\star\},$$ where $\alpha(I_{j})=(\alpha_{j+1},\dots,\alpha_{m-1},\alpha_{m}+\displaystyle\sum_{i=1}^{j}\alpha_{i})$.

Recall that the moment polytope is the $(m-2)$-cube. 
Hence,the above chain of fibrations is a real Bott tower.
One can easily check that the characterstic matrix of $\mathrm{Ch}(\alpha)$ is
\[
\begin{bmatrix}
  \begin{matrix}
  \mathbf{I}_{m-2} & \rvline & \mathbf{I}'_{m-3} & \rvline & \mathbf{1}
  \end{matrix}
\end{bmatrix}
\]
where $\mathbf{I}_{m-2}$ is the block of $(m-2)\times (m-2)$ identity matrix, $\mathbf{I}'_{m-3}$ is the $(m-2)\times (m-3)$ block containing size $(m-3)$ identity matrix with the last row of zeros, the last block $\mathbf{1}$ is the column of $1$'s. 
The corresponding Bott matrix (that encodes the Stiefel-Whitney class of the fibration at each stage) is  

\[ 
\begin{bmatrix}
 \begin{matrix}
 {0} & \dots & 0 & {1} \\
 \vdots & \ddots & \vdots & \vdots\\
  0&\dots &0 & 1\\
 0 & \dots & 0 & 0
 \end{matrix}
\end{bmatrix}.
\]
Similarly, the remaining three aspherical chain spaces are towers of $S^1$-fibrations starting from a non-orientable surface. 

\bibliographystyle{plain}

\end{document}